\newtheorem{thm}{Theorem}[section]
\newtheorem{lem}[thm]{Lemma}
\newtheorem{defn}[thm]{Definition}
\theoremstyle{remark}
\newtheorem{rem}[thm]{Remark}
\numberwithin{equation}{section}
\newcommand{\al}{\alpha}
\def\vz{\varepsilon}
\def\lz{\lambda}
\def\Lz{\Lambda}
\def\az{\alpha}
\def\gz{\gamma}
\def\sz{\sigma}
\def\({\Bigl(}
\def \){ \Bigr)}
 \def\bb{{\Bbb B^d}}
 \def\rb{{\mathbf r}}
 \def\RR{{\mathbb R}}
\def\sz{\sigma}
\def\ss{{\Bbb S}^{d}}
\def\lb{\langle}
\def\rb{\rangle}
\def\bi{\bibitem}
\def\Lz{\Lambda}
\begin{document}
\def\RR{\mathbb{R}}
\def\Exp{\text{Exp}}
\def\FF{\mathcal{F}_\al}

\title[] {Approximation numbers of  Sobolev and Gevrey type embeddings on the sphere and on the ball --  Preasymptotics,  asymptotics, and tractability}

\author[]{Jia Chen, Heping Wang} \address{School of Mathematical Sciences, Capital Normal
University, Beijing 100048,
 China.}
\email{ jiachencd@163.com;\ \  \ wanghp@cnu.edu.cn.}

\keywords{Approximation number;  Sobolev spaces; Gevrey type
spaces, preasymptotics and asymptotics; tractability}

\begin{abstract} In this paper, we  investigate optimal linear approximations ($n$-approximation
numbers )    of the embeddings from the  Sobolev spaces  $H^r\
(r>0)$ for various equivalent norms and the Gevrey type spaces
$G^{\az,\beta}\ (\az,\beta>0)$ on the sphere $\ss$ and on the ball
$\bb$, where the approximation error is measured in the
$L_2$-norm.  We obtain preasymptotics, asymptotics, and strong
equivalences of the above approximation numbers as  a function in
$n$ and the dimension $d$. We  emphasis that  all equivalence
constants in the above preasymptotics and asymptotics are
independent of  the dimension $d$ and $n$. As a consequence we
obtain that  for the absolute error criterion the approximation
problems $I_d: H^{r}\to L_2$
 are weakly tractable if and only if $r>1$, not uniformly
weakly tractable,  and do not suffer from the curse of
dimensionality. We also prove that  for any $\az,\beta>0$, the
approximation problems $I_d: G^{\az,\beta}\to L_2$ are  uniformly
weakly tractable, not polynomially tractable, and
quasi-polynomially tractable if and only if $\az\ge 1$.

\end{abstract}

\maketitle
\input amssym.def

\section{Introduction}

This paper is  devoted to investigating the behavior of the
approximation numbers of embeddings of  Sobolev spaces  and Gevrey
type spaces on the sphere $\ss$ and on the ball $\bb$ into $L_2$.
The approximation numbers of a bounded linear operator
$T:X\rightarrow Y$ between two Banach spaces are defined as
\begin{align*}\label{1.1}
a_n(T:X\rightarrow Y):&=\inf_{rank A< n}\sup_{\|x|X\|\leq 1}\|Tx-Ax|Y\|\notag\\
&=\inf_{rank A< n}\|T-A:X\rightarrow Y\|,\ \ n\in \Bbb N_+,
\end{align*}
where $\Bbb N_+=\{1,2,3,\dots\},\ \Bbb N=\{0,1,2,3,\dots\}$. They
describe the best approximation of $T$ by finite rank operators.
If $X$ and $Y$ are Hilbert spaces and $T$ is compact, then $a_n(T
)$ is the $n$th singular number of $T$. Also $a_n(T)$ is the $n$th
minimal worst-case error  with respect to arbitrary algorithms and
general information in the Hilbert setting.

 On the torus $\Bbb
T^d$, there are many results concerning  asymptotics of the
approximation numbers of smooth function spaces, see the
monographs \cite{Te} by Temlyakov  and the references therein.
However, the obtained asymptotics  often hide dependencies on the
 dimension $d$ in the constants, and can only be seen after
``waiting exponentially long ($n\ge 2^d)$'' if $d$ is large. In
order to overcome this deficiency, K\"uhn and other authors
obtained preasymptotics and asymptotics of the approximation
numbers of the classical isotropic Sobolev spaces, Sobolev spaces
of dominating mixed smoothness, periodic Gevrey spaces, and
anisotropic  Sobolev spaces (see \cite{KSU1, KSU2, KMU, CW}). Note
that in these preasymptotics and asymptotics, the equivalence
constants are independent of the dimension $d$ and $n$.

On the sphere $\ss$ and on the ball $\bb$, the following two-sided
estimates can be found in \cite{Ka} and \cite{WH} in a slightly
more general setting:
\begin{equation}\label{1.2}C_1(r,d)n^{-r/d}\le a_n(I_d:
H^r(\ss)\to L_2(\ss))\le C_2(r,d)n^{-r/d},\ n\in\Bbb N_+,
\end{equation}
and
\begin{equation}\label{1.3}C_3(r,d)n^{-r/d}\le a_n(I_d: H^r(\bb)\to L_2(\bb))\le C_4(r,d)n^{-r/d}, \ n\in\Bbb N_+,\end{equation}
 where $I_d$ is the
identity (embedding) operator, $H^r(\ss),\ H^r(\bb)$ are the
Sobolev spaces on the sphere $\ss$  and on the ball $\bb$, the
constants $C_i(r,d), i=1,2,3,4$,  only depending on the smoothness
index $r$ and the dimension
 $d$,
were not explicitly determined.

In the paper  we discuss preasymptotics and asymptotics of the
approximation numbers of the embeddings $I_d$ of the Sobolev
spaces $H^r\ (r>0)$ and the Gevrey type spaces $G^{\az,\beta}\
(\az,\beta>0)$  on the sphere $\ss$ and on the ball $\bb$ into
$L_2$. We remark that  the Gevrey type spaces have a long history
and have been used in numerous problems related to partial
differential equations.

Our main focus in this paper is to clarify, for arbitrary but
fixed $r>0$ and $\az,\beta>0$, the dependence of these
approximation numbers $a_n(I_d)$ on $d$. In fact, it is necessary
to fix the norms on the spaces $H^r$ on $\ss$ and on $\bb$
 in advance, since the constants $C_i(r,d),\ i=1,2,3,4
$ in \eqref{1.2} and \eqref{1.3} depend on the size of the
respective unit balls. Surprisingly, for a collection of quite
natural norms of $H^r$ (see Sections 2.1 and 6.1), and  for
sufficiently large $n$, say $n\ge 2^d$, it turns out that the
optimal constants decay polynomially in $d$, i.e.,
$$C_1(r,d)\asymp_r C_2(r,d)\asymp_r C_3(r,d)\asymp_r
 C_4(r,d)\asymp_r d^{-r},$$where
  $A\asymp B$ means that there exist two constants $c$ and $C$
which are called the equivalence constants such that $c A\le B\le
C A$, and $\asymp_{r}$ indicates that the equivalence constants
depend only on $r$. This means that on $\ss$ and on $\bb$, for
$n\ge 2^d$,
$$ a_n(I_d: H^r\to L_2)\asymp_r d^{-r}  n^{-r/d},$$where the equivalence
constants are independent of $d$ and $n$. We also show that  on
$\ss$ and on $\bb$, for $n\ge 2^d$,
$$ \ln( a_n(I_d: G^{\az,\beta}\to L_2))\asymp_\az -\beta  d^{\az}  n^{\az/d},$$
where the equivalence constants depend only on $\az$, but not on
$d$ and $n$.

Specially, we prove that the limits
\begin{equation*}\lim_{n\to\infty}n^{r/d}a_n(I_d:
H^{r}(\ss)\to
L_2(\ss))=\Big(\frac2{d\,!}\Big)^{r/d}\end{equation*} and
\begin{equation*}\lim_{n\to\infty}n^{r/d}a_n(I_d: H^{r}(\bb)\to
L_2(\bb))=\Big(\frac1{d\,!}\Big)^{r/d}\end{equation*} exist,
having the same value for various norms. We also prove that for
$0<\az<1$, $\beta>0$, $ \gamma=\Big(\frac2{d\,!}\Big)^{-\az/d}$,
$\tilde \gamma=\Big(\frac1{d\,!}\Big)^{-\az/d}$,
\begin{equation*}\lim_{n\to \infty }e^{\beta \gz n^{\az/d}}a_n(I_d: G^{\az, \beta}(\ss)\to L_2(\ss))=1.\end{equation*}
and
\begin{equation*}\lim_{n\to \infty }e^{\beta \tilde\gz n^{\az/d}}a_n(I_d: G^{\az, \beta}(\bb)\to L_2(\bb))=1.\end{equation*}

For small $n, 1\le n\le 2^d$, we also determine explicitly how
these approximation numbers $ a_n(I_d)$  behave preasymptotically.
We emphasize that the preasymptotic behavior of $ a_n(I_d)$ is
completely different from its asymptotic behavior.  For example,
we  show that
\begin{align*}
a_n(I_d: H^{r,*}(\ss)\to L_2(\ss))&\asymp_r a_n(I_d:
H^{r,*}(\bb)\to L_2(\bb))\\ & \asymp_r
\left\{\begin{matrix} 1,\ \ \ &n=1,\\
d^{-r/2}, & \ \  2\le n\leq d,\\
 d^{-r/2}\Big(\frac{\log(1+\frac{d}{\log n})}{\log n}\Big)^{r/2},&\ \  d\le n\le 2^d,
\end{matrix}\right.
\end{align*}
and \begin{align*} \ln \big(a_n(I_d: G^{\az,\beta}(\ss)\to
L_2(\ss))\big) &\asymp_\az \ln \big(a_n(I_d: G^{\az,\beta}(\bb)\to
L_2(\bb))\big) \\ &\asymp_\az -\beta
 \left\{\begin{matrix}
1, & \ \  1\le n\leq d,\\
 \Big(\frac{\log n}{\log(1+\frac{d}{\log n})}\Big)^{\az},&\ \  d\le n\le 2^d,
\end{matrix}\right.
\end{align*}
 where the equivalence constants depend only on $r$ or $\az$, but not
on $d$ and $n$. Here ``$*$'' stands for a specific (but natural)
norm in $H^r$ on $\ss$ and on $\bb$.

Finally we consider  tractability results for the approximation
problems of the Sobolev embeddings  and the Gevrey type embeddings
on $\ss$ and on $\bb$. Based on  the  asymptotic  and
preasymptotic behavior of $ a_n(I_d : H^{r}\to L_2)$ and
$a_n(I_d:G^{\az,\beta}\to L_2)$, we show that for the absolute
error criterion the approximation problems $I_d: H^{r}\to L_2$
 are weakly tractable if and only if $r>1$, not uniformly
weakly tractable,  and do not suffer from the curse of
dimensionality. We also prove that for any $\az,\beta>0$, the
approximation problems $I_d: G^{\az,\beta}\to L_2$ are   uniformly
weakly tractable, not polynomially tractable, and
quasi-polynomially tractable if and only if $\az\ge 1$ and  the
exponent of quasi-polynomial tractability is
$$t^{\rm qpol}=\sup_{m\in \Bbb N}\frac m {1+\beta m^\az},\ \
\az\ge1.$$

The paper is organized as follows. In Section 2.1 we give
definitions  of  the  Sobolev spaces with various equivalent norms
and the Gevrey type spaces on the sphere.
   Section 2.2 is
devoted to some basics on the approximation numbers on the sphere.
In Section 3, we study  strong equivalence of the approximation
numbers $ a_n(I_d : H^{r}(\ss)\to L_2(\ss))$ and
$a_n(I_d:G^{\az,\beta}(\ss)\to L_2(\ss))$. Section 4 contains
results concerning preasymptotics and asymptotics of the above
approximation numbers.  Section 5 transfers our approximation
results  into the tractability ones  of the respective
approximation problems.  In the final Section 6, we obtain the
corresponding results on $\bb$ such as strong equivalence,
preasymptotics and asymptotics of the approximation numbers $
a_n(I_d : H^{r}(\bb)\to L_2(\bb))$ and
$a_n(I_d:G^{\az,\beta}(\bb)\to L_2(\bb))$, and tractability of the
respective approximation problems.

\section{Preliminaries on the sphere}

\subsection{Sobolev spaces  and Gevrey type spaces on the  sphere}

\

Let $\ss=\{x\in\mathbb{R}^{d+1}:\, |x|=1\}$ ($d\ge 2$) be the unit
sphere of $\mathbb{R}^{d+1}$ (with $|x|$ denoting the Euclidean
norm in $\mathbb{R}^{d+1}$) endowed with the rotationally
invariant measure $\sz$ normalized by $\int_{\ss}d\sz=1$.  Denote
by $L_2(\ss)$ the collection of real measurable functions $f$ on
$\ss$ with finite norm
$$\|f|L_2(\ss)\|=\Big(\int_{\ss}|f(x)|^2\, d\sz(x)\Big)^\frac{1}{2}<+ \infty.$$

  We denote by $\mathcal{H}_\ell^d$ the space of
all spherical harmonics of degree $l$ on $\ss$. Denote by
$\Pi^{d+1}_m(\Bbb S^d)$  the set of spherical polynomials on
$\mathbb{S}^d$ of degree $\le m$, which is just the set of
polynomials of degree $\le m$ on $\mathbb{R}^{d+1}$ restricted to
$\mathbb{S}^d$. It is well known (see \cite[Corollaries 1.1.4 and
1.1.5]{DaiX}) that the dimension of $\mathcal{H}_\ell^d$ is
\begin{equation}\label{2.1} Z(d,\ell):={\rm
dim}\,\mathcal{H}_\ell^d=\left\{\begin{array}{cl} 1,\ \ \
   & {\rm if}\ \ \ell=0,\\
\frac{(2\ell +d-1)\,(\ell +d-2)!}{(d-1)!\ l!},\ \     & {\rm if}\
\ \ell=1,2,\dots,
\end{array}\right.\end{equation} and \begin{equation}\label{2.2}C(d,m):={\rm
dim}\,\Pi_m^{d+1}(\Bbb S^d)=\frac{(2m+d)(m+d-1)!}{m!\,d!},\
m\in\Bbb N.\end{equation}  Let
$$\{Y_{\ell,k}\equiv Y_{\ell,k}^{d}\ |\ k=1,\dots, Z(d,\ell)\}$$
be a fixed orthonormal basis for $\mathcal{H}_\ell^d$. Then
$$\{Y_{\ell,k}\ |\ k=1,\dots, Z(d,\ell),\ \ell=0,1,2,\dots\}$$ is
an orthonormal basis for the Hilbert space $L_2(\ss)$. Thus any
$f\in L_2(\ss)$ can be expressed by its Fourier (or Laplace)
series$$f=\sum_{\ell=0}^{\infty}
H_\ell(f)=\sum_{\ell=0}^{\infty}{\sum_{k=1}^{Z(d,\ell)} \lb
f,Y_{\ell,k}\rb Y_{\ell,k}},$$
 where $H_\ell^d(f)=\sum\limits_{k=1}^{Z(d,\ell)} \lb
f,Y_{\ell,k}\rb Y_{\ell,k},\ \ell=0,1,\dots,$ denote the
orthogonal  projections of $f$ onto $\mathcal{H}_\ell^d$, and
$$\lb f,Y_{\ell,k}\rb =\int_{\ss} f(x)
Y_{\ell,k}(x) \, d\sz(x)$$ are the  Fourier coefficients of $f$.
We have the following Parseval equality:
$$\|f|L_2(\ss)\|=\Big(\sum_{\ell=0}^{\infty}\sum_{k=1}^{Z(d,\ell)} |\lb
f,Y_{\ell,k}\rb |^2\Big)^{1/2}.$$

Let $\Lz=\{\lz_k\}_{k=0}^\infty$ be a bounded sequence, and let
$T^\Lz$ be a multiplier operator on $L_2(\ss)$ defined by $$T^\Lz
(f)=\sum_{\ell=0}^{\infty}\lz_\ell
H_\ell(f)=\sum_{\ell=0}^{\infty}\lz_\ell {\sum_{k=1}^{Z(d,\ell)}
\lb f,Y_{\ell,k}\rb Y_{\ell,k}}.$$

Let $SO(d+1)$ be the special rotation group of order ${d+1}$,
i.e., the set of all rotation on $\Bbb R^{d+1}$. For any $\rho \in
SO(d+1)$ and $f\in L_2(\ss)$, we define $\rho(f)(x)=f(\rho x)$.
  It is well known (see \cite[Proposition 2.2.9]{DaiX}) that
a bounded linear operator $T$ on $L_2(\ss)$ is  a multiplier
operator if and only if   $T\rho=\rho T$ for any $\rho \in
SO(d+1)$.

\begin{defn}\label{d2.1}Let $\Lz=\{\lz_k\}_{k=0}^\infty$ be  a non-increasing
positive sequence with $\lim\limits_{k\to\infty}\lz_k=0$. We
define the multiplier  space $H^\Lz(\ss)$ by
\begin{align*}H^\Lz(\ss)&:=\Big\{T^\Lz f\,    \big|\, f\in
L_2(\ss) \ {\rm and }\ \|T^\Lz
f\,\big|\,H^\Lz(\ss)\|=\|f|L_2(\ss)\|<\infty\Big\}\\&
 := \Big\{f\in
L_2(\ss)\,    \big|\,
\|f\,\big|\,H^\Lz(\ss)\|:=\Big(\sum_{\ell=0}^{\infty}\frac1{\lz_\ell^2}\sum_{k=1}^{Z(d,\ell)}
|\lb f,Y_{\ell,k}\rb |^2\Big)^{1/2}<\infty\Big\}.
\end{align*}\end{defn}

Clearly, the multiplier  space $H^\Lz(\ss)$ is a Hilbert space
with inner product
$$\langle f,g \rangle_{H^\Lz(\ss)}= \sum_{\ell=0}^{\infty}\frac1{\lz_\ell^2}\sum_{k=1}^{Z(d,\ell)}
\lb f,Y_{\ell,k}\rb\, \lb g,Y_{\ell,k}\rb.$$

We remark that Sobolev spaces and Gevrey type spaces on the sphere
$\ss$ are special multiplier spaces whose definitions are given as
follows.

\begin{defn}\label{d2.2}Let $r>0$ and $\square\in\{*,+,\#,-\}$. The  Sobolev space $H^{r,\square}(\ss)$ is the collection of all $f\in L_2(\ss)$ such
that
$$\|f\,\big|\,H^{r,\square}(\ss)\|=\Big(\sum_{\ell=0}^{\infty}(r_{\ell,d}^{\square})^{-2}\sum_{k=1}^{Z(d,\ell)}
|\lb f,Y_{\ell,k}\rb |^2\Big)^{1/2}<\infty,$$ where
\begin{align*}
r_{\ell,d}^{*}&=(1+(\ell(\ell+d-1))^r)^{-1/2},\ \
r_{\ell,d}^{+}=(1+\ell(\ell+d-1))^{-r/2},\\
r_{\ell,d}^{\#}&=(1+\ell)^{-r},\ \ \ \ \qquad \qquad\qquad
r_{\ell,d}^{-}=(\ell+(d-1)/2)^{-r},\ \ \ l=0,1,\dots.
\end{align*}
If we set $\Lz^{\square}=\{r_{k,d}^\square\}_{k=0}^\infty$, then
the Sobolev space $H^{r,\square}(\ss)$ is just the multiplier
space $H^{\Lz^\square}(\ss)$.
\end{defn}

\begin{rem}We note that the above four Sobolev norms are equivalent with the equivalence constants depending on $d$ and $r$.  The natural  Sobolev space on the sphere is
$H^{r,*}(\ss)$. The Sobolev space $H^{r,-}(\ss)$ is given in
\cite[Definition 3.23]{AH}.
\end{rem}

\begin{rem}Let $\triangle$ be the Laplace-Beltrami operator  on the sphere. It is well known
that the spaces $\mathcal{H}_\ell^d, \ \ell=0,1,2, \dots$, are
just the eigenspaces corresponding to the eigenvalues $-\ell(\ell
+d-1)$ of  the operator $\triangle$. Given $s>0$, we define the
$s$-th (fractional) order Laplace-Beltrami operator
$(-\triangle)^s$ on $\ss$ in a distributional sense by
$$H_0((-\triangle)^s(f))=0,\ \ \ H_\ell((-\triangle)^s(f))=(\ell(\ell
+d-1))^s H_\ell(f), \  \ \ell=1,2,\dots\ ,$$ where $f$ is a
distribution on $\ss$. We can define $(I-\triangle)^s, \
(((d-1)/2)^2I-\triangle)^s $ analogously, where $I$ is the
identity operator. For $r>0$ and $f\in H^{r}(\ss)$, we have the
following equalities.
\begin{align*}
\|f\,\big|\,H^{r,*}(\ss)\|&=\Big(\|f|L_2(\ss)\|^2+\|(-\triangle)^{r/2}f|L_2(\ss)\|^2\Big)^{1/2},\\
\|f\,\big|\,H^{r,+}(\ss)\|&=\Big(\|(I-\triangle)^{r/2}f|L_2(\ss)\|^2\Big)^{1/2},\\
\|f\,\big|\,H^{r,-}(\ss)\|&=\Big(\|(((d-1)/2)^2I-\triangle)^{r/2}f|L_2(\ss)\|^2\Big)^{1/2}.\end{align*}
\end{rem}

\begin{defn}\label{d2.3}Let $0<\az,\beta<\infty$. The Gevrey type space $G^{\az,\beta}(\ss)$ is the collection of all $f\in L_2(\ss)$ such
that
$$\|f\,\big|\,G^{\az,\beta}(\ss)\|=\Big(\sum_{\ell=0}^{\infty}e^{2\beta \ell^\az}\sum_{k=1}^{Z(d,\ell)}
|\lb f,Y_{\ell,k}\rb |^2\Big)^{1/2}<\infty.$$ If we set
$\Lz_{\az,\beta}=\{e^{-\beta k^\az}\}_{k=0}^\infty$, then the
Gevrey type space $G^{\az,\beta}(\ss)$ is just the multiplier
space $H^{\Lz_{\az,\beta}}(\ss)$. \end{defn}

\begin{rem} M. Gevrey \cite{G} introduced in 1918 the classes of smooth functions on $\Bbb R^d$ that are nowadays called Gevrey
classes. They
 have played an
important role in study of partial differential equation. A
standard reference on Gevrey spaces  is Rodino¡¯s book \cite{R}.
The Gevrey spaces on $d$-dimensional torus were introduced and
investigated in \cite{KMU}.  Our definition is a natural
generalization from $\Bbb R^d$ to $\ss$. When $d=1$, the unit ball
of $G^{\az,\beta}(\ss)$ recedes to the class $A_q^{\tau,b}$ with
$q=2, \ b=\az$ and $\tau=\beta$ introduced in \cite[p. 73]{Te}.
When $\alpha=1$, the action of the Gevrey type multiplier
 $T^{\Lz_{1,\beta}}$ on $f\in L_2(\ss)$ is  just the Possion integral of $f$ on the sphere (see \cite[pp. 34-35]{DaiX}). \end{rem}

\subsection{Approximation numbers}

\

 Let
${H}$ and ${G}$ be two  Hilbert spaces and $S$ be a compact linear
operator from  $H$ to $G$.  The fact concerning the approximation
numbers $a_n(S: H\to G)$  is well-known, see e.g. K\"onig
\cite[Section 1.b]{K}, Pinkus \cite[Theorem IV.2.2]{P}, and Novak
and Wo\'niakowski \cite[Corollary 4.12]{NW1}. The following lemma
is a simple  form  of the above fact.

Let $H$ be a separable Hilbert space, $\{e_k\}_{k=1}^\infty $ an
orthonormal basis  in $H$, and ${\tau}=\{\tau_k\}_{k=1}^\infty$ a
sequence of positive numbers with $\tau_1\ge\tau_2\ge \dots\ge
\tau_k\ge\cdots$. Let $H^{ \tau}$ be a Hilbert space defined by
$$H^\tau=\Big\{x\in H\ |\ \|x|H\|=\Big(\sum_{k=1}^\infty
\frac{|(x,e_k)|^2}{\tau_k^2}\Big)^{1/2}<\infty\Big\}.$$ According
to \cite[Corollary 2.6]{P} we have the following lemma.

\begin{lem}\label{l2.1}  Let $H, \tau $ and $H^{\tau }$ be defined as above.
Then $$a_n(I_d: H^\tau\to H)= \tau_n, \
 \ n\in\Bbb N_+.$$
\end{lem}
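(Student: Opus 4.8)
The plan is to reduce the statement to the already-quoted fact about singular numbers of diagonal operators between Hilbert spaces. First I would observe that the hypotheses of the lemma set up exactly a diagonal (multiplier-type) operator: the embedding $I_d\colon H^\tau\to H$ sends the orthonormal basis vector $\tau_k e_k$ of $H^\tau$ to $\tau_k e_k\in H$, so with respect to the orthonormal bases $\{\tau_k e_k\}_k$ of $H^\tau$ and $\{e_k\}_k$ of $H$ the operator is represented by the diagonal matrix $\mathrm{diag}(\tau_1,\tau_2,\dots)$. Since $\tau_1\ge\tau_2\ge\cdots>0$ and $\tau_k$ need not tend to $0$ in general — but here we may assume it does, or at least that $I_d$ is bounded — the numbers $\tau_k$ are precisely the singular values of $I_d$ arranged in non-increasing order.

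Next I would invoke the cited result (K\"onig \cite[Section 1.b]{K}, Pinkus \cite[Theorem IV.2.2]{P}, or \cite[Corollary 4.12]{NW1}), which states that for a compact operator $S$ between Hilbert spaces the approximation number $a_n(S)$ equals the $n$th singular value $s_n(S)$. Combined with the identification of the singular values above, this yields $a_n(I_d\colon H^\tau\to H)=\tau_n$ immediately. For completeness I would also sketch the elementary two-sided argument, which avoids even citing the singular-value theorem: for the upper bound, take $A$ to be the orthogonal projection onto $\mathrm{span}\{e_1,\dots,e_{n-1}\}$ composed with $I_d$; then $\mathrm{rank}\,A\le n-1<n$ and for $\|x\mid H^\tau\|\le 1$ one has $\|I_d x-Ax\mid H\|^2=\sum_{k\ge n}|(x,e_k)|^2\le \tau_n^2\sum_{k\ge n}|(x,e_k)|^2/\tau_k^2\le\tau_n^2$, giving $a_n\le\tau_n$.

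For the matching lower bound I would use the standard dimension/volume argument: let $A$ be any operator with $\mathrm{rank}\,A<n$. Then $A$ vanishes on some nonzero vector in the $n$-dimensional subspace $V=\mathrm{span}\{e_1,\dots,e_n\}$; more precisely, choose $x=\sum_{k=1}^n c_k e_k$ with $Ax=0$ and $x\neq 0$. Normalize so that $\|x\mid H^\tau\|^2=\sum_{k=1}^n|c_k|^2/\tau_k^2=1$. Then $\|I_d x-Ax\mid H\|^2=\|x\mid H\|^2=\sum_{k=1}^n|c_k|^2\ge \tau_n^2\sum_{k=1}^n|c_k|^2/\tau_k^2=\tau_n^2$, using $\tau_k\ge\tau_n$ for $k\le n$. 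Taking the supremum over the unit ball and then the infimum over $A$ gives $a_n\ge\tau_n$. Together with the upper bound this proves $a_n(I_d\colon H^\tau\to H)=\tau_n$.

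There is essentially no obstacle here: the only point requiring a word of care is the existence of the nonzero vector $x\in V$ with $Ax=0$, which follows since $A|_V\colon V\to G$ is a linear map from an $n$-dimensional space with image of dimension at most $\mathrm{rank}\,A\le n-1$, hence has nontrivial kernel. All other steps are routine manipulations of Parseval-type sums, so I would present the proof as a short two-paragraph verification (or simply as a citation to \cite[Corollary 2.6]{P}, as the excerpt suggests).
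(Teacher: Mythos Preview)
Your proposal is correct and matches the paper's approach: the paper does not actually prove this lemma but simply cites \cite[Corollary 2.6]{P}, exactly as you note at the end. Your additional self-contained two-sided argument (projection for the upper bound, kernel-dimension count for the lower bound) is a standard and correct elaboration that the paper omits.
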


In the sequel, we always suppose that
$\Lz=\{\lz_{k,d}\}_{k=0}^\infty$ is  a non-increasing positive
sequence with $\lim\limits_{k\to\infty}\lz_{k,d}=0$. It follows
from Definition \ref{d2.1} that the mutiplier space $H^\Lz(\ss)$
is of form $H^\tau=(L_2(\ss))^\tau$ with
\begin{align*}
\{\tau_k\}_{k=0}^\infty=\{\lz_{0,d},\underbrace{\lz_{1,d},\cdots,\lz_{1,d}}_{Z(d,1)},\underbrace{\lz_{2,d},
\cdots,\lz_{2,d}}_{Z(d,2)},\cdots,\underbrace{\lz_{k,d},\cdots,\lz_{k,d}}_{Z(d,k)},\cdots\},
\end{align*} where $Z(d,m)$ and $C(d,m)$ are given in \eqref{2.1}
and \eqref{2.2}. According to Lemma \ref{l2.1} we obtain

\begin{thm} \label{t2.1} For $C(d,k-1)<n\le C(d,k),\ k=0,
1,2,\dots,$ we have $$a_n(T^\Lz: L_2(\ss)\to L_2(\ss))= a_n(I_d:
H^\Lz(\ss)\to L_2(\ss))=\lz_{k,d},
$$where we set $C(d,-1)=0$.

Specially, let $r>0$, $\square\in\{*,+,\#,-\}$, and
$0<\az,\beta<\infty$. Then for $C(d,k-1)<n\le C(d,k),\
k=0,1,2,\dots,$ we have
\begin{equation}\label{2.3} a_n(I_d:
H^{r,\square}(\ss)\to L_2(\ss))=r_{k,d}^\square,\end{equation} and
\begin{equation}\label{2.4} a_n(I_d: G^{\az,\beta}(\ss)\to
L_2(\ss))=e^{-\beta k^\az},\end{equation}where the definitions of
$H^{r,\square}(\ss),\ r_{k,d}^\square$, $\square\in\{*,+,\#,-\}$
are given in Definition \ref{d2.2}, and the definition of
$G^{\az,\beta}(\ss)$ is  in Definition \ref{d2.3}.
 \end{thm}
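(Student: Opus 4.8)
The plan is to reduce everything to Lemma~\ref{l2.1} by recognizing the multiplier space $H^\Lz(\ss)$ as an instance of the abstract weighted Hilbert space $H^\tau$. Concretely, I would take $H=L_2(\ss)$ with the orthonormal basis $\{Y_{\ell,k}\}$ ordered first by degree $\ell=0,1,2,\dots$ and then, within each degree, by $k=1,\dots,Z(d,\ell)$. Relative to this ordered basis, Definition~\ref{d2.1} says precisely that $\|f\mid H^\Lz(\ss)\|=\big(\sum \frac{|\langle f,Y_{\ell,k}\rangle|^2}{\lz_\ell^2}\big)^{1/2}$, so $H^\Lz(\ss)=H^\tau$ where $\tau$ is the sequence obtained by repeating each $\lz_{k,d}$ exactly $Z(d,k)$ times, exactly as displayed before the theorem. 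The hypothesis that $\Lz$ is non-increasing and positive guarantees that $\tau$ is a non-increasing sequence of positive numbers, so Lemma~\ref{l2.1} applies and gives $a_n(I_d:H^\Lz(\ss)\to L_2(\ss))=\tau_n$.

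Next I would identify $\tau_n$ explicitly. By construction, the blocks of constant value in $\tau$ have lengths $Z(d,0),Z(d,1),Z(d,2),\dots$, and the partial sums of these lengths are exactly the numbers $C(d,k)=\sum_{j=0}^k Z(d,j)$ appearing in \eqref{2.2} (with the convention $C(d,-1)=0$). Hence for an index $n$ with $C(d,k-1)<n\le C(d,k)$, the $n$th term $\tau_n$ falls in the $k$th block and equals $\lz_{k,d}$. This proves the first displayed identity. The equality $a_n(T^\Lz:L_2(\ss)\to L_2(\ss))=a_n(I_d:H^\Lz(\ss)\to L_2(\ss))$ is immediate from the definition of $H^\Lz(\ss)$ as the image of $L_2(\ss)$ under $T^\Lz$ with $\|T^\Lz f\mid H^\Lz(\ss)\|=\|f\mid L_2(\ss)\|$, which makes $T^\Lz:L_2(\ss)\to H^\Lz(\ss)$ an isometric isomorphism; composing with the embedding $I_d:H^\Lz(\ss)\to L_2(\ss)$ recovers $T^\Lz:L_2(\ss)\to L_2(\ss)$, and approximation numbers are unchanged under composition with an isometric isomorphism.

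Finally, the ``Specially'' part is just specialization. For $\square\in\{*,+,\#,-\}$, Definition~\ref{d2.2} states that $H^{r,\square}(\ss)=H^{\Lz^\square}(\ss)$ with $\Lz^\square=\{r_{k,d}^\square\}_{k=0}^\infty$; one checks that each of the four sequences $r_{k,d}^\square$ is positive and non-increasing in $k$ and tends to $0$ (elementary, since $\ell(\ell+d-1)$, $1+\ell$, and $\ell+(d-1)/2$ are increasing in $\ell$ and $r>0$), so the general formula yields \eqref{2.3}. Likewise, Definition~\ref{d2.3} gives $G^{\az,\beta}(\ss)=H^{\Lz_{\az,\beta}}(\ss)$ with $\Lz_{\az,\beta}=\{e^{-\beta k^\az}\}_{k=0}^\infty$, which is positive, strictly decreasing (as $\az,\beta>0$), and tends to $0$, so the general formula yields \eqref{2.4}.

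There is essentially no obstacle here: the only thing to be careful about is the bookkeeping of the block structure of $\tau$ and the off-by-one convention $C(d,-1)=0$, together with verifying the monotonicity of $\Lz^\square$ and $\Lz_{\az,\beta}$ so that Lemma~\ref{l2.1} is genuinely applicable. The substantive content is entirely contained in Lemma~\ref{l2.1} (i.e.\ in \cite[Corollary 2.6]{P}); this theorem merely translates it to the sphere via the spherical-harmonic decomposition.
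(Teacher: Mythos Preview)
Your proposal is correct and follows exactly the same approach as the paper: the paper's entire argument is the paragraph immediately preceding the theorem, which identifies $H^\Lz(\ss)$ as $(L_2(\ss))^\tau$ with the block-repeated sequence $\tau$ and then invokes Lemma~\ref{l2.1}. You have simply spelled out the bookkeeping (the identification $C(d,k)=\sum_{j\le k}Z(d,j)$, the isometry $T^\Lz:L_2(\ss)\to H^\Lz(\ss)$, and the monotonicity checks for the special sequences) that the paper leaves implicit.
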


\section{Strong equivalences of approximation numbers}

This section is devoted to giving strong equivalence of the
approximation numbers of the Sobolev embeddings and the Gevrey
type embeddings on the sphere.

\begin{thm}\label{t3.1}Suppose that $\lim\limits_{k\to\infty}\lz_{k,d}k^s=1$ for some $s>0$. Then
\begin{equation}\label{3.1}\lim_{n\to\infty}n^{s/d}a_n(I_d: H^\Lz(\ss)\to L_2(\ss))=\Big(\frac2{d\,!}\Big)^{s/d},\end{equation}
Specially,  for $r>0$ and  $\square\in\{*,+,\#,-\}$, we have
\begin{equation}\label{3.2}\lim_{n\to\infty}n^{r/d}a_n(I_d:
H^{r,\square}(\ss)\to
L_2(\ss))=\Big(\frac2{d\,!}\Big)^{r/d}.\end{equation}
\end{thm}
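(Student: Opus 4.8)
The plan is to reduce the limit relation \eqref{3.1} to a purely asymptotic statement about the counting function $C(d,k)$, and then exploit the exact formula for the approximation numbers given in \thmref{t2.1}. By \thmref{t2.1}, for $C(d,k-1)<n\le C(d,k)$ we have $a_n(I_d:H^\Lz(\ss)\to L_2(\ss))=\lz_{k,d}$, so that $n^{s/d}a_n(I_d:H^\Lz\to L_2)$ lies between $C(d,k-1)^{s/d}\lz_{k,d}$ and $C(d,k)^{s/d}\lz_{k,d}$. Thus it suffices to show that both $C(d,k)^{s/d}\lz_{k,d}$ and $C(d,k-1)^{s/d}\lz_{k,d}$ converge to $\bigl(\tfrac2{d!}\bigr)^{s/d}$ as $k\to\infty$ (with $d$ fixed), and that the ratio $C(d,k)/C(d,k-1)\to1$. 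Since by hypothesis $\lz_{k,d}\sim k^{-s}$, what we really need is
\begin{equation*}
\lim_{k\to\infty}\frac{C(d,k)}{k^d}=\frac2{d!},\qquad \lim_{k\to\infty}\frac{C(d,k-1)}{k^d}=\frac2{d!}.
\end{equation*}

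First I would record from \eqref{2.2} that $C(d,k)=\dfrac{(2k+d)(k+d-1)!}{k!\,d!}$. The factor $(k+d-1)!/k!=(k+1)(k+2)\cdots(k+d-1)$ is a polynomial in $k$ of degree $d-1$ with leading term $k^{d-1}$, and the factor $(2k+d)$ has leading term $2k$; hence $C(d,k)=\dfrac{1}{d!}\bigl(2k+d\bigr)\prod_{j=1}^{d-1}(k+j)\sim\dfrac{2k^d}{d!}$ as $k\to\infty$. The same asymptotic holds for $C(d,k-1)$ since replacing $k$ by $k-1$ does not affect the leading term, and consequently $C(d,k)/C(d,k-1)\to1$. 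Therefore $C(d,k-1)^{s/d}\lz_{k,d}$ and $C(d,k)^{s/d}\lz_{k,d}$ are each asymptotically equal to $\bigl(\tfrac{2k^d}{d!}\bigr)^{s/d}k^{-s}=\bigl(\tfrac2{d!}\bigr)^{s/d}$, and the squeeze gives \eqref{3.1}.

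For the specialization \eqref{3.2}, I would simply check that each of the four sequences $\Lz^\square=\{r_{k,d}^\square\}$ from Definition \ref{d2.2} satisfies the hypothesis $\lim_{k\to\infty}r_{k,d}^\square k^r=1$ with $s=r$. Indeed $r_{k,d}^{*}=(1+(k(k+d-1))^r)^{-1/2}\sim (k(k+d-1))^{-r/2}\sim k^{-r}$; likewise $r_{k,d}^{+}=(1+k(k+d-1))^{-r/2}\sim k^{-r}$; $r_{k,d}^{\#}=(1+k)^{-r}\sim k^{-r}$; and $r_{k,d}^{-}=(k+(d-1)/2)^{-r}\sim k^{-r}$. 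Applying \eqref{3.1} with $s=r$ to each of these then yields \eqref{3.2}.

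The argument is essentially routine; the only mild subtlety — and the one point worth spelling out carefully — is the interplay of the two limiting parameters. Here $d$ is held fixed and $n$ (equivalently $k$) tends to infinity, so all the asymptotic equivalences above are in $k$ alone and the $d$-dependent constant $\bigl(\tfrac2{d!}\bigr)^{s/d}$ is legitimately a constant. One should also note that the squeeze is valid because $\lz_{k,d}>0$ and $C(d,k-1)<n\le C(d,k)$ forces $C(d,k-1)^{s/d}\lz_{k,d}\le n^{s/d}\lz_{k,d}\le C(d,k)^{s/d}\lz_{k,d}$, and as $n\to\infty$ the index $k=k(n)\to\infty$ as well, so both bounding sequences converge to the same value. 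No genuine obstacle arises; this is the expected behavior once the exact identity of \thmref{t2.1} is in hand.
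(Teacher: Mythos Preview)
Your proof is correct and follows essentially the same route as the paper: you invoke \thmref{t2.1} to write $a_n=\lz_{k,d}$ on the interval $C(d,k-1)<n\le C(d,k)$, squeeze $n^{s/d}a_n$ between $C(d,k-1)^{s/d}\lz_{k,d}$ and $C(d,k)^{s/d}\lz_{k,d}$, and use $C(d,k)\sim 2k^d/d!$ together with $\lz_{k,d}\sim k^{-s}$. The only addition is that you spell out the verification of $r_{k,d}^\square k^r\to1$ for each $\square\in\{*,+,\#,-\}$, which the paper leaves implicit.
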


\begin{proof}For $C(d,k-1)<n\le C(d,k),\ k=0,1,2,\dots,$ we have
$$a_n(I_d: H^\Lz(\ss)\to L_2(\ss))=\lz_{k,d}.$$
It follows that
\begin{align*}(C(d,k-1))^{s/d}\lz_{k,d}< n^{s/d}a_n(I_d: H^\Lz(\ss)\to L_2(\ss))\le (C(d,k))^{s/d}\lz_{k,d}.\end{align*}
We recall from \eqref{2.2} that $$C(d,k)=
\frac{(2k+d)(k+d-1)\,!}{k\,!\,d\,!},
$$which yields that
$$\lim_{k\to\infty}\frac{C(d,k)}{k^d}=\lim_{k\to\infty}\frac{C(d,k-1)}{k^d}=\frac2{d\,!}.$$
Since
\begin{align*}&\lim_{k\to\infty}(C(d,k-1))^{s/d}\lz_{k,d}=\lim_{k\to\infty}(C(d,k))^{s/d}\lz_{k,d}\\ &\quad\ =
\lim_{k\to\infty}\Big(\frac{C(d,k)}{k^d}\Big)^{s/d}k^s\lz_{k,d}=\Big(\frac2{d\,!}\Big)^{s/d},
\end{align*}
we obtain \eqref{3.1}. Theorem \ref{t3.1} is proved.
 \end{proof}

\begin{rem}   One can rephrase \eqref{3.2}  as  strong
equivalences $$a_n(I_d: H^{r,\square}(\ss)\to L_2(\ss))\sim
n^{-r/d}\Big(\frac2{d\,!}\Big)^{r/d}
$$for $r>0$ and  $\square\in\{*,+,\#,-\}$. The novelty of Theorems
\ref{t3.1}  is that they give strong equivalences of $a_n(I_d:
H^{r,\square}(\ss)\to L_2(\ss)) $ and provide asymptotically
optimal constants, for arbitrary fixed $d$ and $r>0$.
\end{rem}

\begin{thm}\label{t3.3}Let $0<\az<1$, $\beta>0$, and $\gamma=\Big(\frac2{d\,!}\Big)^{-\az/d}$. Then we have
\begin{equation}\label{3.3}\lim_{n\to \infty }e^{\beta \gz n^{\az/d}}a_n(I_d: G^{\az, \beta}(\ss)\to L_2(\ss))=1.\end{equation}\end{thm}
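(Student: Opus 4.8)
The plan is to reduce \eqref{3.3} to the analysis of the ratio $e^{\beta\gz n^{\az/d}}e^{-\beta k^\az}$ over the relevant range of $n$, using the exact identity from \thmref{t2.1}. By \eqref{2.4}, for $C(d,k-1)<n\le C(d,k)$ we have $a_n(I_d:G^{\az,\beta}(\ss)\to L_2(\ss))=e^{-\beta k^\az}$, so
\begin{equation*}
e^{\beta\gz (C(d,k-1))^{\az/d}-\beta k^\az}< e^{\beta\gz n^{\az/d}}a_n(I_d: G^{\az, \beta}(\ss)\to L_2(\ss))\le e^{\beta\gz (C(d,k))^{\az/d}-\beta k^\az}.
\end{equation*}
Thus it suffices to show that both $\gz(C(d,k))^{\az/d}-k^\az\to 0$ and $\gz(C(d,k-1))^{\az/d}-k^\az\to 0$ as $k\to\infty$ (with $d$ fixed), since then the exponents tend to $0$ and the squeeze theorem finishes the proof.

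Next I would carry out this limit computation. Recall from the proof of \thmref{t3.1} that $C(d,k)/k^d\to 2/(d\,!)$ and $C(d,k-1)/k^d\to 2/(d\,!)$ as $k\to\infty$. Hence $(C(d,k))^{\az/d}=k^\az\bigl(C(d,k)/k^d\bigr)^{\az/d}$, and since $\gz=\bigl(2/(d\,!)\bigr)^{-\az/d}$,
\begin{equation*}
\gz(C(d,k))^{\az/d}-k^\az=k^\az\Bigl[\Bigl(\tfrac{d\,!}{2}\Bigr)^{\az/d}\Bigl(\tfrac{C(d,k)}{k^d}\Bigr)^{\az/d}-1\Bigr].
\end{equation*}
The bracketed factor tends to $0$, but it is multiplied by $k^\az\to\infty$, so I need a quantitative estimate: I must show the bracket is $o(k^{-\az})$. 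For this, expand $C(d,k)=\frac{(2k+d)(k+d-1)!}{k!\,d!}$ more precisely. Writing $\frac{(k+d-1)!}{k!}=\prod_{j=1}^{d-1}(k+j)=k^{d-1}\prod_{j=1}^{d-1}(1+j/k)=k^{d-1}\bigl(1+O(1/k)\bigr)$ and $2k+d=2k\bigl(1+O(1/k)\bigr)$, we get $C(d,k)/k^d=\frac2{d!}\bigl(1+O(1/k)\bigr)$. Therefore $\bigl(\frac{d!}{2}\cdot\frac{C(d,k)}{k^d}\bigr)^{\az/d}=\bigl(1+O(1/k)\bigr)^{\az/d}=1+O(1/k)$, and the bracketed factor is $O(1/k)$, so the whole expression $\gz(C(d,k))^{\az/d}-k^\az=O(k^{\az-1})$. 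Since $\az<1$, this tends to $0$. The same computation with $k$ replaced by $k-1$ inside the factorials (and noting $k^\az-(k-1)^\az\to 0$ as well, since $\az<1$) gives $\gz(C(d,k-1))^{\az/d}-k^\az\to 0$.

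The main obstacle — and the only place real care is needed — is precisely the tension between the factor $k^\az\to\infty$ and the factor $\bigl(\frac{d!}{2}\cdot\frac{C(d,k)}{k^d}\bigr)^{\az/d}-1\to 0$: the crude convergence $C(d,k)/k^d\to 2/d!$ used in \thmref{t3.1} is not enough here, and one genuinely needs the rate $O(1/k)$ together with the hypothesis $\az<1$ so that $k^\az\cdot O(1/k)=O(k^{\az-1})\to 0$. (This is also where the restriction $\az<1$ in the statement comes from; for $\az=1$ the product would be $O(1)$ and the limit would fail.) Everything else is a routine squeeze. Assembling the pieces: the two exponents in the displayed inequality both tend to $0$, hence both bounds tend to $e^0=1$, and \eqref{3.3} follows.
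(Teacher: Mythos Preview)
Your proof is correct and follows essentially the same route as the paper's: both start from the sandwich \eqref{3.4} via \eqref{2.4}, factor $\gz(C(d,k))^{\az/d}-k^\az=k^\az\bigl[\bigl((1+\tfrac{d}{2k})\prod_{j=1}^{d-1}(1+\tfrac{j}{k})\bigr)^{\az/d}-1\bigr]$, and use that the bracket is $O(1/k)$ together with $\az<1$ to conclude the exponent tends to $0$. Your treatment is in fact slightly more explicit than the paper's about why the rate $O(1/k)$ (rather than mere convergence $C(d,k)/k^d\to 2/d!$) is needed and where the hypothesis $\az<1$ enters.
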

\begin{proof}It follows from \eqref{2.4} that for $C(d,k-1)<n\le C(d,k),\
k=0,1,2,\dots,$
\begin{equation*} a_n\equiv a_n(I_d: G^{\az,\beta}(\ss)\to
L_2(\ss))=e^{-\beta k^\az}.\end{equation*}Therefore, we have
\begin{equation}\label{3.4} e^{\beta \gz (C(d,k-1))^{\az/d}}e^{-\beta k^\az}<  e^{\beta \gz n^{\az/d}}a_n\le e^{\beta \gz (C(d,k))^{\az/d}} e^{-\beta k^\az}.\end{equation}
Since for $0<\az<1$,
$$\lim_{k\to\infty}(\gz (C(d,k))^{\az/d} -k^\az)=\lim_{k\to\infty}k^\az\Big(\Big( \big(1+\frac d{2k}\big)\prod_{j=1}^{d-1}(1+\frac jk)\Big)^{\frac \az d}-1\Big)=0,$$
we get
\begin{equation}\label{3.5}\lim_{n\to\infty}e^{\beta \gz (C(d,k))^{\az/d}} e^{-\beta
k^\az}=e^{\beta\lim\limits_{k\to\infty}(\gz (C(d,k))^{\az/d}
-k^\az)}=1.\end{equation} Similarly, we can show that
$$\lim_{n\to\infty}e^{\beta \gz (C(d,k-1))^{\az/d}} e^{-\beta
k^\az}=1,$$which combining with \eqref{3.5} and \eqref{3.4},
yields \eqref{3.3}. Theorem \ref{t3.3} is proved.
\end{proof}

\begin{rem}   One can rephrase \eqref{3.3} as  a strong
equivalence
$$a_n(I_d: G^{\az, \beta}(\ss)\to L_2(\ss))\sim e^{-\beta \gz
n^{\az/d}}$$ for $0<\az<1$ and $\beta>0$, where
$\gamma=\Big(\frac2{d\,!}\Big)^{-\az/d}$. The novelty of Theorems
\ref{t3.3} is that they give a strong equivalence of $ a_n(I_d:
G^{\az, \beta}(\ss)\to L_2(\ss))$ and provide asymptotically
optimal constants, for arbitrary fixed $d$, $0<\az<1$, and
$\beta>0$.
\end{rem}

\begin{rem}For  $\az=1$,  we have
$$\lim_{n\to\infty}e^{\beta \gz (C(d,k-1))^{\az/d}} e^{-\beta
k^\az}=e^{ \frac{\beta(d-1)^2}{2d}}\neq e^{ \frac{\beta d}{2}}=
\lim_{n\to\infty}e^{\beta \gz (C(d,k))^{\az/d}} e^{-\beta k^\az},
$$
which means that the strong equivalence $$a_n(I_d: G^{\az,
\beta}(\ss)\to L_2(\ss))\sim e^{-\beta \gz n^{\az/d}}$$ does not
hold. However, we have the weak equivalence
\begin{equation}\label{3.6}a_n(I_d: G^{\az, \beta}(\ss)\to L_2(\ss))\asymp e^{-\beta
(nd\,!/2)^{1/d}}, \end{equation}  where the equivalence constants
may depend  on $d$, but not on $n$.

For $\az>1$, there seems even no weak asymptotics of $a_n(I_d:
G^{\az, \beta}(\ss)\to L_2(\ss))$ as \eqref{3.6}.
\end{rem}

 \section{Preasymptotics and asymptotics of the approximation numbers}

 This section is devoted to giving preasymptotics and asymptotics  of the
approximation numbers of the Sobolev embeddings and the Gevrey
type embeddings on the sphere.

 \begin{lem}\label{l4.1} For  $m\in \Bbb N $ and $d\in\Bbb  N_+$ we have
\begin{align}\label{4.1}\max\Big\{\big(1+\frac{m}{d}\big)^{d},\,\big(1+\frac{d}{m}\big)^{m}\Big\}\leq
C(d,m)\le
\min\Big\{e^d\big(1+\frac{m}{d}\big)^{d},\,e^m\big(1+\frac{d}{m}\big)^{m}\Big\}.
\end{align}
\end{lem}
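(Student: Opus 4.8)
The plan is to recall the exact formula $C(d,m)=\frac{(2m+d)(m+d-1)!}{m!\,d!}$ from \eqref{2.2} and to rewrite it in a product form that separates cleanly into ``$d$-many factors'' and ``$m$-many factors.'' Indeed, observe that
$$\frac{(m+d-1)!}{m!\,(d-1)!}=\binom{m+d-1}{d-1}=\prod_{j=1}^{d-1}\frac{m+j}{j}=\prod_{j=1}^{d-1}\Big(1+\frac{m}{j}\Big),$$
and, symmetrically, $\binom{m+d-1}{d-1}=\binom{m+d-1}{m}=\prod_{i=1}^{m}\big(1+\frac{d-1}{i}\big)$. Hence
$$C(d,m)=\frac{2m+d}{d}\prod_{j=1}^{d-1}\Big(1+\frac{m}{j}\Big)=\frac{2m+d}{d}\cdot\frac{\prod_{i=1}^{m}(d-1+i)}{m!}.$$
So the whole proof reduces to estimating a product of the shape $\prod_{j=1}^{N}(1+\frac{a}{j})$ from above and below by $(1+a)^{N}$-type quantities, times the harmless prefactor $\frac{2m+d}{d}$.

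For the lower bound I would use the elementary inequality $\prod_{j=1}^{N}(1+x_j)\ge 1+\sum_{j=1}^{N}x_j$, or more simply bound the product below by keeping only appropriately chosen factors; concretely, to get $C(d,m)\ge(1+\frac{m}{d})^{d}$ I would note $\frac{2m+d}{d}\ge 1$ and that each of the $d-1$ factors $1+\frac{m}{j}$ with $1\le j\le d-1$ is at least $1+\frac{m}{d}$, giving $d-1$ such factors; the missing $d$-th copy of $(1+\frac md)$ is supplied by $\frac{2m+d}{d}=1+\frac{2m}{d}\ge 1+\frac{m}{d}$. The bound $C(d,m)\ge(1+\frac{d}{m})^{m}$ follows the same way from the symmetric product $\prod_{i=1}^{m}(1+\frac{d-1}{i})$ together with the prefactor, after checking the $i=1$ term and the prefactor absorb the needed slack; alternatively one simply invokes the $d\leftrightarrow m$ symmetry of $\binom{m+d-1}{d-1}$ and re-runs the first argument. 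For the upper bound I would use $1+x\le e^{x}$: then $\prod_{j=1}^{d-1}(1+\frac{m}{j})\le\prod_{j=1}^{d-1}(1+\frac md)\cdot\prod_{j=1}^{d-1}\frac{1+m/j}{1+m/d}$, which is not quite what is wanted, so instead I would write directly $\prod_{j=1}^{d-1}(1+\frac mj)\le\big(1+\frac md\big)^{d}\exp\!\big(\sum_{j=1}^{d-1}\big(\frac mj-\frac md\big)\big)$ and bound the harmonic-type sum — but a cleaner route is to bound $\prod_{j=1}^{d-1}(1+\frac mj)\le\exp\!\big(m\sum_{j=1}^{d-1}\frac1j\big)$ and compare $\sum_{j=1}^{d-1}\frac1j$ with $\ln(1+\frac md)+$ something; cleanest of all: group factors as $C(d,m)=\frac{2m+d}{d}\prod_{j=1}^{d-1}\frac{m+j}{j}$ and telescope against $\big(1+\frac md\big)^d=\prod(\text{copies})$, using $\frac{m+j}{j}\le\frac{m+d}{d}\cdot\frac{d}{j}\cdot\frac{1}{1}$... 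I will ultimately just prove $\frac{2m+d}{d}\prod_{j=1}^{d-1}\frac{m+j}{j}\le e^{d}\big(1+\frac md\big)^{d}$ by writing each ratio $\frac{(m+j)/j}{(m+d)/d}=\frac{d(m+j)}{j(m+d)}$ and showing the product of these over $j=1,\dots,d-1$, times $\frac{(2m+d)/d}{(m+d)/d}\le 2$, is at most $e^{d}$ via $1+x\le e^x$ applied to $\frac{d(m+j)}{j(m+d)}-1=\frac{(d-j)m}{j(m+d)}$, whose sum over $j$ is $\le\frac{m}{m+d}\sum_{j=1}^{d-1}\frac{d-j}{j}\le d$.

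The main obstacle, and the step I would spend the most care on, is exactly this last point: choosing the grouping of factors so that the ``error exponent'' in the $1+x\le e^x$ step sums to at most $d$ (resp. at most $m$ for the other half of the min), cleanly and without a $\log d$ loss. The naive bound $\sum_{j=1}^{d-1}\frac1j\sim\ln d$ is too weak; the trick is to compare each factor of the product with $(1+\frac md)$ \emph{and} exploit the prefactor $\frac{2m+d}{d}$, so that the residual sum $\sum_{j}\big(\frac mj-\frac md\big)$-type quantity telescopes or is dominated by $d$ uniformly in $m$. By the $d\leftrightarrow m$ symmetry of the central binomial coefficient, proving one of the two bounds in the $\min$ (and one in the $\max$) automatically gives the other, so I only need to handle one orientation carefully. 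Once the product estimates are in place, multiplying by the prefactor and taking the min/max over the two symmetric forms yields \eqref{4.1} immediately.
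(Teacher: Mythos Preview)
Your lower-bound argument for $C(d,m)\ge(1+\tfrac md)^d$ is clean and correct. The upper-bound step, however, contains a genuine error: the claim
\[
\frac{m}{m+d}\sum_{j=1}^{d-1}\frac{d-j}{j}\le d
\]
is false. As $m\to\infty$ the left side tends to $\sum_{j=1}^{d-1}\frac{d-j}{j}=dH_{d-1}-(d-1)$, and already for $d=5$ this equals $\tfrac{77}{12}>5$. The inequality $1+x\le e^x$ is far too lossy when $x$ is large, and for $j=1$ one has $x=\frac{m(d-1)}{m+d}\approx d-1$. The grouping that works is multiplicative rather than additive: write $\frac{d(m+j)}{j(m+d)}=\frac dj\cdot\frac{m+j}{m+d}\le\frac dj$, so that
\[
\frac{\binom{m+d}{d}}{(1+m/d)^d}=\prod_{j=1}^{d}\frac{d(m+j)}{j(m+d)}\le\frac{d^d}{d!}\le e^{d-1},
\]
the last inequality by telescoping $(1+\tfrac1k)^k\le e$. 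This gives $\binom{m+d}{d}\le e^{d-1}(1+\tfrac md)^d$, and then $C(d,m)\le 2\binom{m+d}{d}\le e^d(1+\tfrac md)^d$. The paper bypasses all of this by simply observing $\binom{m+d}{d}\le C(d,m)\le 2\binom{m+d}{d}$ and quoting the ready-made bounds on $\binom{m+d}{d}$ from \cite{KSU1}.

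A smaller but related point: your appeals to ``$d\leftrightarrow m$ symmetry'' are not quite right. The coefficient $\binom{m+d-1}{d-1}$ is \emph{not} invariant under swapping $d$ and $m$ (it becomes $\binom{m+d-1}{m-1}$), and $C(d,m)\ne C(m,d)$ because of the asymmetric prefactor $\frac{2m+d}{d}$. The genuinely symmetric object is $\binom{m+d}{d}=\binom{m+d}{m}$; once you pass through the sandwich $\binom{m+d}{d}\le C(d,m)\le 2\binom{m+d}{d}$, the second half of both the $\max$ and the $\min$ does follow from the first by swapping $m$ and $d$ there. As written, though, neither your second lower bound nor your second upper bound follows from the first by the symmetry you invoke.
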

\begin{proof} We note  that $$\binom{m+d}d\le C(d,m)=\binom{m+d}d \frac{2m+d}{m+d}\le
2\binom{m+d}d.
$$
Using the inequality (see \cite[(3.6)]{KSU1})
\begin{align*}\label{4.2} \binom{m+d}{d}\leq
e^{d-1}{(1+\frac{m}{d})}^{d},
\end{align*}we get the upper estimate of $C(d,m)$. Using the inequality (see
\cite[(3.5)]{KSU1})$$\max\Big\{\big(1+\frac{m}{d}\big)^{d},\,\big(1+\frac{d}{m}\big)^{m}\Big\}\leq
\binom{m+d}d,
$$we get the lower estimate of $C(d,m)$. Lemma
\ref{l4.1} is proved.
\end{proof}

\begin{thm}\label{t4.1} Let $r>0$.
We have
\begin{equation}\label{4.2}
a_n(I_d: H^{r,*}(\ss)\to L_2(\ss)) \asymp
\left\{\begin{matrix} 1,\ \ \ &n=1,\\
d^{-r/2}, & \ \  2\le n\leq d,\\
 d^{-r/2}\Big(\frac{\log(1+\frac{d}{\log n})}{\log n}\Big)^{r/2},&\ \  d\le n\le 2^d, \\
 d^{-r}n^{-r/d},&\ \  n\ge 2^d,
\end{matrix}\right.
\end{equation}
where the equivalence constants depend only on $r$, but not on $d$
and $n$.
\end{thm}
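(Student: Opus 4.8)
The plan is to exploit the exact formula from Theorem~\ref{t2.1}, namely that for $C(d,k-1)<n\le C(d,k)$ we have $a_n(I_d:H^{r,*}(\ss)\to L_2(\ss))=r_{k,d}^*=(1+(k(k+d-1))^r)^{-1/2}$, and then translate the four ranges of $n$ into corresponding ranges of the "level" $k=k(n,d)$ via the two-sided bounds on $C(d,m)$ in Lemma~\ref{l4.1}. The key observation is that since $r_{k,d}^*\asymp_r (1+k(k+d-1))^{-r/2}\asymp_r (\max\{1,k\}\cdot\max\{1,k+d-1\})^{-r/2}$, everything reduces to pinning down the size of $k$ as a function of $n$ in each regime. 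So the real content is a purely combinatorial/asymptotic analysis of the equation $n\asymp C(d,k)$.

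First I would dispatch the easy cases. For $n=1$ we have $k=0$ and $r_{0,d}^*=1$. For $2\le n\le d$: here $n\le d< d+2=C(d,1)$, so $k=1$ and $r_{1,d}^*=(1+d^r)^{-1/2}\asymp_r d^{-r/2}$, giving the second line. These require only the explicit values $C(d,0)=1$, $C(d,1)=d+2$. The substantive ranges are the last two. For $n\ge 2^d$: one shows $k\ge d$ in this regime (using $C(d,d)\le 4^d\le$ something comparable, or more carefully $C(d,d)\asymp 2^{O(d)}$), so $k(k+d-1)\asymp k^2$ and thus $a_n\asymp_r k^{-r}$; then from $(1+k/d)^d\le C(d,k)\le e^d(1+k/d)^d$ one extracts $k\asymp d\,n^{1/d}$ (taking $d$-th roots, the factor $e^d$ becomes a bounded multiplicative constant after the root, and $1+k/d\asymp k/d$ since $k\ge d$), yielding $a_n\asymp_r (d\,n^{1/d})^{-r}=d^{-r}n^{-r/d}$, which is the fourth line.

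The heart of the matter — and the step I expect to be the main obstacle — is the third range $d\le n\le 2^d$, where $1\le k\le d$ and the implicit equation $n\asymp C(d,k)$ is genuinely transcendental. Here $k+d-1\asymp d$, so $a_n\asymp_r (kd)^{-r/2}=d^{-r/2}k^{-r/2}$, and the task is to prove $k\asymp \dfrac{\log n}{\log\!\big(1+\frac{d}{\log n}\big)}$. The plan is to use the bound $(1+d/k)^k\le C(d,k)\le e^k(1+d/k)^k$ from Lemma~\ref{l4.1}; taking logarithms gives $\log n \asymp k\log\!\big(1+\frac dk\big)$ (the additive $k$ from $e^k$ is absorbed because $\log(1+d/k)\ge\log 2$ when $k\le d$, so $k\le k\log(1+d/k)/\log 2$). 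One then needs to invert the function $k\mapsto k\log(1+d/k)$ on $[1,d]$ up to constants. I would do this by substituting the candidate $k_0:=\frac{\log n}{\log(1+d/\log n)}$ into $k\log(1+d/k)$ and checking it reproduces $\log n$ up to absolute constants; the elementary inequality needed is that for $1\le a\le b$ one has $\log(1+b/a)\asymp \frac{\log(1+b/\log(1+b))}{\cdots}$-type monotonicity, more precisely that $t\mapsto t\log(1+d/t)$ is increasing and that replacing $k$ by a constant multiple changes $k\log(1+d/k)$ by at most a constant factor on this range. Verifying that $k\asymp k_0$ forces $\log n\asymp k\log(1+d/k)\asymp k_0\log(1+d/k_0)\asymp\log n$ closes the loop, and then $a_n\asymp_r d^{-r/2}k_0^{-r/2}=d^{-r/2}\big(\frac{\log(1+d/\log n)}{\log n}\big)^{r/2}$ as claimed. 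Care is needed at the two endpoints $n\approx d$ and $n\approx 2^d$ to confirm the third formula matches the second and fourth there (at $n=d$: $\log n=\log d$, $k_0\asymp \log d/\log d=1$; at $n=2^d$: $\log n\asymp d$, $k_0\asymp d/\log 2\asymp d$), which provides a consistency check on the constants.
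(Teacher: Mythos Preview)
Your overall plan matches the paper's proof exactly: reduce via Theorem~\ref{t2.1} to determining the level $k$ with $C(d,k-1)<n\le C(d,k)$, then use Lemma~\ref{l4.1} to convert the four ranges of $n$ into information about $k$. The cases $n=1$, $2\le n\le d$, and $n\ge 2^d$ are handled just as the paper does (the paper in fact splits at $C(d,d)$ rather than $2^d$ and checks, as you anticipate, that the formulas agree on the overlap $2^d\le n\le C(d,d)\le(2e)^d$).

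The one place your sketch is looser than the paper is the inversion $k\log(1+d/k)\asymp\log n$ in the middle range. Your plan is to verify $f(k_0)\asymp\log n$ for $f(t)=t\log(1+d/t)$ and then appeal to monotonicity plus the stability ``$f(Ck)\asymp_C f(k)$'' to conclude $k\asymp k_0$. But stability in that form only says $f(Ck)/f(k)$ is \emph{bounded}; to go from $f(k)\asymp f(k_0)$ to $k\asymp k_0$ you need the stronger fact that $\inf_k f(Ck)/f(k)\to\infty$ as $C\to\infty$ (equivalently, stability of $f^{-1}$). This is true---one checks $\inf_k f(Ck)/f(k)\ge C\log 2/\log(1+C)$---but it is the missing ingredient, and your final sentence ``verifying that $k\asymp k_0$ forces $\log n\asymp\ldots\asymp\log n$ closes the loop'' reads as circular. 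The paper avoids this by a direct bootstrap: from $\log n\le m\log(2ed/m)$ one gets $\log(2ed/\log n)\ge\log(2ed/m)-\log\log(2ed/m)\ge\tfrac12\log(2ed/m)$ via the elementary inequality $x\ge 2\log x$ for $x\ge 2$, hence $m\ge\frac{\log n}{2\log(2ed/\log n)}$. The upper bound $m-1\le\log n/\log(1+d/\log n)$ follows more simply, by first noting $m-1\le\log n$ and substituting back. Either route works, but the paper's bootstrap is the cleaner way to get the lower bound on $k$.
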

\begin{proof}  We
have for $n=1$,
$$a_n(I_d: H^{r,*}(\ss)\to L_2(\ss)) =1.$$

For $2\le n\le C(d,1)=d+2$, we have
$$   a_n(I_d: H^{r,*}(\ss)\rightarrow L_2(\ss))=r^*_{1,d}=(1+d^r)^{-1/2}\asymp d^{-r/2}.$$This means that
$$a_n(I_d: H^{r,*}(\ss)\rightarrow L_2(\ss))\asymp d^{-r/2}\asymp \left\{\begin{matrix}
d^{-r/2}, &   2\le n\leq d,\\
 d^{-r/2}\Big(\frac{\log(1+\frac{d}{\log n})}{\log n}\Big)^{r/2},& d\le n\le
 C(d,1).
\end{matrix}\right.$$

For $C(d,m-1)<n\le C(d,m),\ 2\le m\le d$, we have
\begin{equation}\label{4.3}a_n(I_d:  H^{r,*}(\ss)\rightarrow L_2(
\ss))=(1+(m(m+d-1))^r)^{-1/2}\asymp
m^{-r/2}d^{-r/2}.\end{equation} By \eqref{4.1} we get that
$$n\le C(d,m)\le e^m(1+d/m)^m\le \Big(\frac{2ed}m\Big)^m.$$
 It follows that $$ \log n\le m\log
(2ed/m), $$ which implies
\begin{equation}\label{4.4} m\ge \frac{\log n}{\log (2ed/m)}\end{equation}
and $$\log \Big(\frac{2ed}{\log n}\Big)\ge
\log\Big(\frac{2ed}{m\log
(2ed/m)}\Big)=\log(\frac{2ed}m)-\log\Big(\log(\frac {2ed}m)\Big).
$$
 Using the inequality  $x\ge
2\log x$ for $x\ge 2$, we obtain
$$\log \Big(\frac{2ed}{\log n}\Big)\ge \frac12\log(\frac{2ed}m).
$$
This combining with \eqref{4.4} yields
\begin{equation}\label{4.5} m\ge \frac{\log n}{2\log
(2ed/(\log n))}.\end{equation} On other hand, it follows from
\eqref{4.1} that
$$n>C(d,m-1)\ge \Big(1+\frac d{m-1}\Big)^{m-1}.$$ This yields
$$m-1\le\frac{ \log n}{\log \big(1+\frac {d}{m-1}\big)}\le\log n.$$
It follows that
$$m\le \frac{ \log n}{\log \big(\frac {2d}{\log n}\big)}+1,$$
which combining with \eqref{4.5}, leads to
\begin{equation}\label{4.6} m\asymp \frac{ \log n}{1+\log
\big(\frac {d}{\log n}\big)}.\end{equation}It follows from
\eqref{4.3} that for $C(d,m-1)<n\le C(d,m),\ 2\le m\le d$,
\begin{align*} a_n(I_d:\  H^{r,*}(\ss)\rightarrow L_2( \ss))\asymp
m^{-r/2}d^{-r/2}\asymp d^{-r/2}\Big(\frac{\log(1+\frac{d}{\log
n})}{\log n}\Big)^{r/2}.\end{align*} Note that
$$  2^d\le C(d,d)\le (2e)^d$$ and for $2^d\le n\le (2e)^d$,  $$ \frac{\log n} {\log(1+\frac{d}{\log
n})}\asymp d\ \ {\rm and}\ \ n^{-r/d}\asymp 1.$$ We obtain that
for $C(d,1)<n\le C(d,d)$,
\begin{align*} a_n(I_d:\  H^{r,*}(\ss)\rightarrow L_2( \ss))&\asymp d^{-r/2}\Big(\frac{\log(1+\frac{d}{\log
n})}{\log n}\Big)^{r/2}\\ &\asymp \left\{\begin{matrix}
 d^{-r/2}\Big(\frac{\log(1+\frac{d}{\log n})}{\log n}\Big)^{r/2},&\ \  d\le n\le 2^d, \\
 d^{-r}n^{-r/d},&\ \  2^d\le n\le C(d,d).
\end{matrix}\right..\end{align*}

For $C(d,m-1)<n\le C(d,m),\ m>d$, we have
\begin{equation}\label{4.7}a_n(I_d:\, H^{r,*}(\ss)\rightarrow L_2(
\ss))=(1+(m(m+d-1))^r)^{-1/2}\asymp m^{-r}.\end{equation} It
follows from \eqref{4.1} that
\begin{align*}\frac md\le 1+\frac{m-1}d \le (C(d,m-1))^{\frac 1d}\le  n^{\frac 1d}\le (C(d,m))^{\frac 1d}\le e(1+\frac md)\le \frac{2e m}d. \end{align*}
Therefore, we get \begin{equation}\label{4.8}m\asymp
dn^{1/d},\end{equation} which combining with \eqref{4.7} yields
that for $n>C(d,d)$,
$$a_n(I_d:\,  H^{r,*}(\ss)\rightarrow L_2(
\ss))\asymp d^{-r}n^{-r/d}.$$The proof of Theorem \ref{t4.1} is
complete.
\end{proof}

Using the same method as in the proof of  Theorem \ref{t4.1}, we
can obtain the following two theorems.

\begin{thm}\label{t4.2} Let $r>0$.
We have
\begin{equation}\label{4.9}
a_n(I_d: H^{r,+}(\ss)\to L_2(\ss)) \asymp
\left\{\begin{matrix} 1,\ \ \ &n=1,\\
d^{-r/2}, & \ \  2\le n\leq d,\\
 d^{-r/2}\Big(\frac{\log(1+\frac{d}{\log n})}{\log n}\Big)^{r/2},&\ \  d\le n\le 2^d, \\
 d^{-r}n^{-r/d},&\ \  n\ge 2^d,
\end{matrix}\right.
\end{equation}

\begin{equation}\label{4.10}
a_n(I_d: H^{r,\#}(\ss)\to L_2(\ss)) \asymp \left\{\begin{matrix}
1, & \ \  1\le n\leq d,\\
 \Big(\frac{\log(1+\frac{d}{\log n})}{\log n}\Big)^{r},&\ \  d\le n\le 2^d, \\
 d^{-r}n^{-r/d},&\ \  n\ge 2^d,
\end{matrix}\right.
\end{equation}
and
\begin{equation}\label{4.11}
a_n(I_d: H^{r,-}(\ss)\to L_2(\ss)) \asymp \left\{\begin{matrix}
 d^{-r},&\ \  1\le n\le 2^d, \\
 d^{-r}n^{-r/d},&\ \  n\ge 2^d,
\end{matrix}\right.
\end{equation}
 where all above equivalence constants depend only on $r$, but not on
$d$ and $n$.
\end{thm}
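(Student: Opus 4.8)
The plan is to reduce everything to the exact formula \eqref{2.3}, namely $a_n(I_d:H^{r,\square}(\ss)\to L_2(\ss))=r_{k,d}^\square$ for $C(d,k-1)<n\le C(d,k)$, and then to run the same three-range dissection ($1\le n\le d$, $d\le n\le 2^d$, $n\ge 2^d$) that was carried out in the proof of \thmref{t4.1}. The only thing that changes from case to case is the algebraic behaviour of $r_{k,d}^\square$ as a function of $k$ and $d$, so the combinatorial core — the two-sided bounds for $C(d,m)$ from \lemref{l4.1} and the resulting equivalence $m\asymp \log n/(1+\log(d/\log n))$ in the middle range and $m\asymp d\,n^{1/d}$ in the large range — can simply be quoted verbatim.

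For \eqref{4.9}, the space $H^{r,+}(\ss)$ has $r_{k,d}^{+}=(1+k(k+d-1))^{-r/2}$, which differs from $r_{k,d}^{*}=(1+(k(k+d-1))^r)^{-1/2}$ only in that the power $r$ sits inside rather than outside; but since $1+x^r\asymp_r (1+x)^r$ for $x\ge 0$, one has $r_{k,d}^{+}\asymp_r r_{k,d}^{*}$ with constants depending only on $r$, so \eqref{4.9} is literally \eqref{4.2} again and needs only this one-line comparison. For \eqref{4.10}, where $r_{k,d}^{\#}=(1+k)^{-r}$, the dependence on $d$ has disappeared from the multiplier; I would note that for $1\le n\le C(d,1)=d+2$ we get $k\in\{0,1\}$ and hence $a_n\asymp 1$, covering the whole range $1\le n\le d$; in the range $d\le n\le 2^d$ one substitutes $k\asymp\log n/(1+\log(d/\log n))$ into $(1+k)^{-r}\asymp k^{-r}$ (valid once $k\ge 1$); and for $n\ge 2^d$ one uses $k\asymp d\,n^{1/d}$ to get $(1+k)^{-r}\asymp (d\,n^{1/d})^{-r}=d^{-r}n^{-r/d}$. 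For \eqref{4.11}, where $r_{k,d}^{-}=(k+(d-1)/2)^{-r}$, the point is that for all $k$ with $C(d,k-1)<n\le 2^d$ we have $k\le d$ (since $C(d,d)\ge 2^d$), hence $k+(d-1)/2\asymp d$ and $a_n\asymp d^{-r}$ uniformly on $1\le n\le 2^d$; while for $n\ge 2^d$ we again have $k\asymp d\,n^{1/d}\gtrsim d$, so $k+(d-1)/2\asymp k\asymp d\,n^{1/d}$ and $a_n\asymp d^{-r}n^{-r/d}$.

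Concretely the steps are: (i) recall \eqref{2.3} and \lemref{l4.1}; (ii) for each of the four claims, identify the $k$-range corresponding to each $n$-range using the bounds $C(d,1)=d+2$, $2^d\le C(d,d)\le(2e)^d$, and for $m>d$ the sandwich $m/d\le n^{1/d}\le 2em/d$; (iii) plug the known asymptotics of $k$ in $n$ and $d$ (the middle-range $k\asymp\log n/(1+\log(d/\log n))$ is established inside the proof of \thmref{t4.1} and may be cited) into the explicit expression for $r_{k,d}^\square$; (iv) simplify using elementary inequalities such as $1+x^r\asymp_r(1+x)^r$, $1+k\asymp k$ for $k\ge1$, $k+(d-1)/2\asymp d$ for $1\le k\le d$, and $n^{-r/d}\asymp 1$ for $1\le n\le 2^d$. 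The main obstacle — to the extent there is one — is purely bookkeeping: making sure the boundary indices match up (e.g. that $n=1$ forces $k=0$ so $a_1=1$ in the starred and plus and sharp cases but $a_1=((d-1)/2)^{-r}\asymp d^{-r}$ in the minus case, consistent with \eqref{4.11}), and confirming that in the $\#$ and $-$ cases the constants genuinely lose all $d$-dependence in the multiplier yet the stated $d$-powers still emerge from the index $k$. Since \thmref{t4.1} has already done the hard combinatorial work, I would simply write ``arguing exactly as in the proof of \thmref{t4.1}, with $r_{k,d}^{*}$ replaced by $r_{k,d}^{+}$ (resp. $r_{k,d}^{\#}$, $r_{k,d}^{-}$) and using the comparisons above, we obtain \eqref{4.9} (resp. \eqref{4.10}, \eqref{4.11}).''
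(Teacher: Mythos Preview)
Your proposal is correct and follows exactly the approach the paper itself takes: the paper gives no separate proof of \thmref{t4.2} beyond the sentence ``Using the same method as in the proof of Theorem~\ref{t4.1}, we can obtain the following two theorems,'' and your write-up supplies precisely the details one would fill in when executing that instruction. Your one-line reduction $r_{k,d}^{+}\asymp_r r_{k,d}^{*}$ via $1+x^r\asymp_r(1+x)^r$ for \eqref{4.9}, and the range-by-range substitutions for \eqref{4.10} and \eqref{4.11}, are all sound and the boundary bookkeeping is handled correctly.
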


\begin{thm}\label{t4.3} Let $\alpha,\beta>0$.
We have
\begin{equation}\label{4.12}
\ln \big(a_n(I_d: G^{\az,\beta}(\ss)\to L_2(\ss))\big) \asymp
-\beta
 \left\{\begin{matrix}
1, & \ \  1\le n\leq d,\\
 \Big(\frac{\log n}{\log(1+\frac{d}{\log n})}\Big)^{\az},&\ \  d\le n\le 2^d, \\
 d^{\az}n^{\az/d},&\ \  n\ge 2^d,
\end{matrix}\right.
\end{equation}
 where the equivalence constants depend only on $\az$, but not on
$d$ and $n$.
\end{thm}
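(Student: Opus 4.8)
The plan is to mirror the structure of the proof of Theorem~\ref{t4.1}, replacing the multiplier sequence $r_{k,d}^\square$ by the Gevrey multiplier $e^{-\beta k^\az}$ from \eqref{2.4} and then transferring the known bracketing $C(d,m-1)<n\le C(d,m)$ into estimates for $k$ in the three ranges of $n$. First I would invoke Theorem~\ref{t2.1}: for $C(d,k-1)<n\le C(d,k)$ one has exactly $a_n(I_d:G^{\az,\beta}(\ss)\to L_2(\ss))=e^{-\beta k^\az}$, hence $\ln a_n=-\beta k^\az$, so the whole problem reduces to determining the order of $k$ as a function of $n$ and $d$. This is precisely the combinatorial content already extracted in the proof of Theorem~\ref{t4.1}, so I would quote those estimates verbatim.

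For $1\le n\le d$ one checks $n\le d+2=C(d,1)$, so $k\in\{0,1\}$, giving $\ln a_n\in\{0,-\beta\}\asymp-\beta$. For the middle range $d\le n\le 2^d$: since $2^d\le C(d,d)\le(2e)^d$, such $n$ satisfy $C(d,k-1)<n\le C(d,k)$ with $1\le k\le d$ (roughly), and Lemma~\ref{l4.1} together with the same manipulations as in \eqref{4.4}--\eqref{4.6} yields
\begin{equation*}
k\asymp\frac{\log n}{1+\log\big(\frac{d}{\log n}\big)}=\frac{\log n}{\log\big(1+\frac{d}{\log n}\big)}\cdot(1+o(1)),
\end{equation*}
where the last equivalence uses $1+\log(d/\log n)\asymp\log(1+d/\log n)$ when $d\ge\log n$. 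Raising to the power $\az$ gives $\ln a_n=-\beta k^\az\asymp_\az-\beta\big(\frac{\log n}{\log(1+d/\log n)}\big)^\az$; here the equivalence constant depends on $\az$ because $(c x)^\az\asymp_\az x^\az$ but the implied constant is $\max(c^\az,c^{-\az})$. For $n\ge 2^d$, i.e. $n>C(d,d)$ so $k>d$, the chain of inequalities \eqref{4.8} gives $k\asymp dn^{1/d}$, hence $\ln a_n=-\beta k^\az\asymp_\az-\beta d^\az n^{\az/d}$.

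The one genuinely new point compared with Theorem~\ref{t4.1} is bookkeeping the $\az$-dependence: because we only have $k$ up to multiplicative constants and we then exponentiate, the two-sided bound for $k^\az$ picks up a constant of the form $C^\az$ with $C$ absolute, which is finite and depends only on $\az$ — this is why the statement reads $\asymp_\az$ rather than with absolute constants. The main (and essentially only) obstacle is therefore purely notational: one must be careful that $\log(1+d/\log n)$ and $1+\log(d/\log n)$ are interchangeable only on the range $\log n\le d$, which is exactly where we need it, and that at the endpoint $n\asymp 2^d$ the three pieces match, i.e. $\big(\frac{\log n}{\log(1+d/\log n)}\big)^\az\asymp_\az d^\az\asymp_\az d^\az n^{\az/d}$ since $n^{\az/d}\asymp 1$ there. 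Assembling these three cases and citing the computations from the proof of Theorem~\ref{t4.1} completes the proof of \eqref{4.12}.
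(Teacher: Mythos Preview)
Your proposal is correct and matches the paper's approach exactly: the paper does not give a separate proof of Theorem~\ref{t4.3} but simply states that it follows ``using the same method as in the proof of Theorem~\ref{t4.1}'', and that is precisely what you do --- reduce via \eqref{2.4} to estimating the index $k$ and then quote \eqref{4.6} and \eqref{4.8}. The only cosmetic wrinkle is the case $n=1$, where $k=0$ forces $\ln a_n=0$ rather than $\asymp -\beta$; this is an artifact of the statement itself and not a defect in your argument.
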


\section{Tractability analysis}

Recently, there has been an increasing interest in $d$-variate
computational problems  with large or even huge $d$. Such problems
are usually solved by algorithms that use finitely many
information operations. In this paper, we limit ourselves to the
worst case setting, information operation is defined as the
evaluation of a continuous linear functional, and we deal with a
Hilbert space setting (source and target space).  The information
complexity $n(\vz, d)$ is defined as the minimal number of
information operations which are needed to find an approximating
solution to  within an error threshold $\vz$. A central issue is
the study of how the information complexity depends on $\vz^{-1}$
and $d$. Such problem is called the tractable problem. Nowadays
tractability of multivariate problems is a very active research
area (see \cite{NW1, NW2,NW3} and the references therein).

 Let
${H_d}$ and ${G_d}$ be two sequences of Hilbert spaces and for
each $d\in \Bbb N_+$,  $F_d$ be the unit ball of  $H_d$. Assume a
sequence of bounded linear operators (solution operators)
$$S_d : H_d\rightarrow G_d$$ for all $d \in \Bbb N_+$.
For $n\in \Bbb N_+$ and $f\in F_d$, $S_d f$ can be approximated by
algorithms
$$A_{n,d}(f)=\Phi _{n,d}(L_1(f),...,L_n(f)),$$
where  $L_j,\ j=1,\dots,n$ are continuous linear functionals on
$F_d$ which are called  general information, and $\Phi _{n,d} :
\Bbb R^n\rightarrow G_d$ is an arbitrary mapping. The worst case
error $e(A_{n,d})$ of the algorithm $A_{n,d}$ is defined as
$$e(A_{n,d})=\sup_{f\in F_d}
\|S_d(f)-A_{n,d}(f)\|_{G_d}.$$  Furthermore, we define the $n$th
minimal worst-case error as
$$e(n,d )=\inf_{A_{n,d}}e(A_{n,d}),$$
where the infimum is taken over all algorithms using $n$
information operators $L_1,L_2,...,L_n$. For $n=0$, we use
$A_{0,d}=0$. The error of $A_{0,d}$ is called the initial error
and is given by
$$e(0,d )=\sup_{f\in F_d}\|S_d f\|_{G_d}.$$
The $n$th minimal worst-case error $e(n,d)$ with respect to
arbitrary  algorithms and general information in the Hilbert
setting is just the $n+1$-approximation number $a_{n+1}(S_d:H_d\to
G_d)$ (see \cite[p. 118]{NW1}), i.e.,
$$e(n,d)=a_{n+1}(S_d:H_d\to G_d).$$

    In this paper, we consider the embedding operators $S_d=I_d$
(formal identity operators).
  For $\varepsilon \in (0,1)$ and $d\in \Bbb N_+$, let $n(\varepsilon, d)$ be the information
  complexity which is defined as the minimal number of continuous linear functionals which are necessary to obtain
  an $\varepsilon -$approximation of $I_d$, i.e.,
$$n(\varepsilon ,d)=\min\{n\,|\,e(n,d)\leq \varepsilon CRI_d \},$$
where
$$CRI_d=\begin{cases}
1,&\  \   \text{for the absolute error criterion},\\
e(0,d), &\   \  \text{for the normalized error
criterion}.\end{cases}$$

 Next, we list the concepts of tractability below. We say
that the approximation problem is

 $\bullet$ \textbf {weakly tractable}, if
\begin{equation*}\label{t1.2}
\lim_{\varepsilon ^{-1}+d\rightarrow \infty }\frac{\ln
n(\varepsilon ,d)}{\varepsilon ^{-1}+d}=0.
\end{equation*}Otherwise, the approximation problem is called intractable.

 $\bullet$  \textbf {uniformly weakly tractable}, if for all $s,t>0$
\begin{equation*}\label{t1.3}
\lim_{\varepsilon ^{-1}+d\rightarrow \infty }\frac{\ln
n(\varepsilon ,d)}{(\varepsilon ^{-1})^{s }+d^{t }}=0.
\end{equation*}

 $\bullet$  \textbf {quasi-polynomially tractable}, if there
exist two positive constants $C,\, t$ such that for all $d\in \Bbb
N_+, \ \varepsilon
 \in(0,1)$,
\begin{equation}\label{t1.4}
n(\varepsilon ,d)\leq C\exp(t(1+\ln\varepsilon ^{-1})(1+\ln d)).
\end{equation}
The infimum of $t$ satisfying \eqref{t1.4} is called the exponent
of quasi-polynomial tractability and is denoted by $t^{\rm qpol}$.

 $\bullet$ \textbf {polynomially tractable}, if there exist non-negative
 numbers $C, p$ and $q$ such that for all $d\in \Bbb N_+, \ \varepsilon
 \in(0,1)$,
\begin{equation}\label{t1.5}
n(\varepsilon ,d )\leq Cd^q(\varepsilon ^{-1})^p.
\end{equation}

 $\bullet$  \textbf {strongly polynomially tractable},
 if there exist non-negative numbers $C$ and $p$ such that for all $d\in \Bbb N_+,\ \varepsilon \in
 (0,1)$,
\begin{equation*}\label{t1.6}
n(\varepsilon ,d )\leq C(\varepsilon ^{-1})^p.
\end{equation*}

Of course, the latter  tractability implies the former
tractability.

 $\bullet$ The approximation problem
suffers from \textbf {the curse of dimensionality}, if there exist
positive numbers $C, \varepsilon _0, \gamma $ such that for all
$0<\varepsilon\leq \varepsilon _{0}$ and infinitely many $d\in
\Bbb N_+$,
\begin{equation*}\label{1.7}
n(\varepsilon ,d)\geq C(1+\gamma )^{d}.
\end{equation*}

 Recently, Siedlecki and  Weimar introduced the
notion of $(s,t)$-weak tractability in \cite{SW}. If for some
fixed $s,\ t>0$ it holds
\begin{equation*}
\lim_{\varepsilon ^{-1}+d\rightarrow \infty }\frac{\ln
n(\varepsilon ,d)}{(\varepsilon ^{-1})^{s}+d^{t}}=0,
\end{equation*}
then the approximation problem is called $(s, t)$-weakly
tractable. Clearly,  $(1,1)$-weak tractability is  just weak
tractability,  whereas the approximation problem is uniformly
weakly tractable  if it is $(s,t)$-weakly tractable  for all
positive $s$ and $t$ (see \cite{S}). Also, if the approximation
problem suffers from the curse of dimensionality, then for any
$s>0, \ 0<t\le1,$ it is not $(s,t)$-weakly tractable.

We introduce the following lemma which is used in the proofs of
main results.

\begin{lem}\label{l5.1} Let $m,d\in\Bbb N_+$, $s>0,\  t>1$. Then
we have
$$\lim_{m+d\to\infty}\frac{m\ln(m+d)}{m^t+d^s}=\lim_{m+d\to\infty}\frac{d\ln(m+d)}{m^s+d^t}=0.$$
\end{lem}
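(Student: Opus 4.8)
The plan is to prove the two limits by elementary estimates, treating the two cases symmetrically and then reducing to a single-variable asymptotic fact. First I would observe that it suffices to prove the first limit, namely $\lim_{m+d\to\infty}\frac{m\ln(m+d)}{m^t+d^s}=0$, since the second follows by interchanging the roles of $m$ and $d$ (the roles of $s$ and $t$ are not symmetric, but the second statement is literally the first one with $m$ and $d$ swapped, so nothing extra is needed). Because $m+d\to\infty$ forces at least one of $m,d$ to tend to infinity, I would split the analysis according to which variable is large.

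The key step is the following observation. Since $t>1$, for the single variable $m$ we have $\frac{m\ln m}{m^t}=\frac{\ln m}{m^{t-1}}\to 0$ as $m\to\infty$; more generally $\frac{m\ln m}{m^t}$ is bounded on $m\in\Bbb N_+$, say by a constant $c_t$. Also $\ln(m+d)\le \ln m+\ln d+\ln 2\le \ln m + \ln(1+d)+\ln 2$ for $m,d\ge 1$. I would then bound
\begin{align*}
\frac{m\ln(m+d)}{m^t+d^s}&\le \frac{m\ln(m+d)}{m^t}+\frac{m\ln(m+d)}{d^s}\\
&\le \frac{m(\ln m+\ln(1+d)+\ln2)}{m^t}+\frac{m(\ln m+\ln(1+d)+\ln2)}{d^s}.
\end{align*}
For the first group of terms, $\frac{m\ln m}{m^t}\to0$ and $\frac{m\ln2}{m^t}\to0$ directly (using $t>1$); the remaining term $\frac{m\ln(1+d)}{m^t}=\frac{\ln(1+d)}{m^{t-1}}$ needs care when $d$ is large while $m$ stays bounded — but then $m^{t-1}\ge 1$ and instead I would absorb this term into the $d^s$ denominator. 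This is exactly why the two denominators $m^t$ and $d^s$ must be played off against each other: whenever $m$ is small, $m^t$ is useless and we must use $d^s$; whenever $d$ is small, $d^s$ is useless and we must use $m^t$. So I would argue by cases: if $m\ge d$, then $m^t+d^s\ge m^t$ and the whole quotient is at most $\frac{m\ln(2m)}{m^t}\le \frac{c\ln(2m)}{m^{t-1}}\to0$; if $m< d$, then $m^t+d^s\ge d^s$ and (since $\ln$ grows slower than any power) $\frac{m\ln(m+d)}{d^s}\le \frac{d\ln(2d)}{d^s}=\frac{\ln(2d)}{d^{s-1}}$ when $s>1$, or more carefully, if $0<s\le1$ one still has $\frac{m\ln(m+d)}{d^s}\le\frac{m\ln(2d)}{d^s}$, and here I would need an additional small-$d$ subcase or use that $m+d\to\infty$ with $m<d$ forces $d\to\infty$ so $\frac{\ln(2d)}{d^s}\cdot m$ — wait, $m$ is not bounded here.

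Let me reconsider: the cleanest route is to fix $\eta>0$, choose $D$ so large that $\frac{\ln(2m)}{m^{t-1}}<\eta$ for all $m\ge D$ and simultaneously control the $d$-side, then handle the finitely many remaining values of the bounded variable by a separate elementary estimate, using that $m+d\to\infty$ forces the other variable to infinity. Concretely: for $m\ge D$ the bound $\frac{m\ln(m+d)}{m^t+d^s}\le \frac{m\ln(m+d)}{m^t}$ combined with $\ln(m+d)\le\ln(2\max(m,d))$ reduces matters to showing $\sup_{d\ge1}\frac{\ln(2\max(m,d))}{m^{t-1}}$ is small, which fails if $d\gg m$ — so this case too must be rerouted through $d^s$. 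The main obstacle is thus precisely this bookkeeping: arranging the case split so that in every regime at least one of the two denominators dominates the numerator $m\ln(m+d)$. I expect the argument to conclude by noting: if $m\le d$ use $m^t+d^s\ge d^s$ and bound by $\frac{d\ln(2d)}{d^s}$ (valid since $m\le d$), which $\to0$ as $d\to\infty$ provided we first note $s$ can be $\le1$ — but $\frac{d\ln(2d)}{d^s}=d^{1-s}\ln(2d)$ does not tend to $0$ when $s\le 1$. Hence one genuinely needs the hypothesis structure more carefully, and I would instead simply invoke: the two statements are used in the paper only with the pairing coming from $t>1$ on the "active" variable, so in the case $m\le d$ the correct denominator to use is $m^t$ after all is — no. The honest resolution is that the lemma as quoted pairs $m^t$ (with $t>1$) against the numerator $m\ln(\cdot)$, and $d^s$ against the numerator $d\ln(\cdot)$ in the second limit; so in the first limit, when $m\le d$, I bound $\frac{m\ln(m+d)}{m^t+d^s}\le\frac{m\ln(m+d)}{m^t}=\frac{\ln(m+d)}{m^{t-1}}$ and use that $m\le d$ together with... this still has the $d$ problem. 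Therefore the genuinely correct proof uses $m^t+d^s\ge \tfrac12(m^t+d^s)\ge (m^t)^{1/2}(d^s)^{1/2}$ is not it either. I would ultimately prove it by: given $\eta$, the set where $\frac{m\ln(m+d)}{m^t+d^s}\ge\eta$ forces both $m\ln(m+d)\ge\eta m^t$, i.e. $\ln(m+d)\ge\eta m^{t-1}$, bounding $m$, and $m\ln(m+d)\ge \eta d^s$ which with $m$ bounded forces $d$ bounded; hence that set is finite, giving the limit. This "the super-level set is finite" argument is the main idea and sidesteps the case analysis cleanly; verifying its two implications is the step requiring the most care.
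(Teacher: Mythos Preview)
Your final ``super-level set'' approach is on the right track but contains a genuine gap: the step ``$\ln(m+d)\ge\eta m^{t-1}$, bounding $m$'' is false as a standalone implication. For any fixed $m$, taking $d$ large makes $\ln(m+d)$ as large as you like, so this inequality alone places no restriction on $m$; your two-step bootstrap (first bound $m$, then with $m$ bounded deduce $d$ bounded) is circular. The idea is salvageable, but only by using both constraints simultaneously: writing $L=\ln(m+d)$, the two inequalities give $m\le(L/\eta)^{1/(t-1)}$ and $d\le(mL/\eta)^{1/s}\le\big((L/\eta)^{1/(t-1)}L/\eta\big)^{1/s}$, so $e^L\le m+d$ is bounded by a fixed polynomial in $L$, forcing $L$ (hence $m+d$) bounded. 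That works, but it is not what you wrote.

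The paper sidesteps all of your case analysis with a single substitution. Set $\gamma=s/t$ and $x=m+d^{\gamma}$; then $m+d\to\infty$ iff $x\to\infty$. One checks $m+d\le x^{\max\{1,1/\gamma\}}$ (using $a^{p}+b^{p}\le(a+b)^{p}$ for $p\ge1$), so $m\ln(m+d)\le\max\{1,1/\gamma\}\,x\ln x$; and from $(a+b)^{t}\le 2^{t}(a^{t}+b^{t})$ with $a=m$, $b=d^{\gamma}$ one gets $m^{t}+d^{s}\ge 2^{-t}x^{t}$. The quotient is then at most a constant times $x^{1-t}\ln x\to0$ since $t>1$. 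This is exactly the ``reduction to a single variable'' you were groping for; the trick is that replacing $d$ by $d^{s/t}$ makes the exponents commensurable and turns $m^{t}+d^{s}$ into (essentially) $x^{t}$, after which the problem is trivial.
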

{\begin{proof}We set $\gamma=s/t$. Let $x=m+d^{\gamma}$. Then
$m+d\to\infty$ if and only if $x\to\infty$. If $\gamma\ge 1$, then
$m+d\le x$, and if $\gamma <1$, then $$m+d\le m^{1/\gz}+d\le
(m+d^\gamma)^{1/\gz}=x^{1/\gz}.$$It follows that
$$m\ln(m+d)\le \max\{1,1/\gz\} x\ln x.$$Using the inequality
$$(a+b)^p\le 2^p(a^p+b^p),\ a,b\ge0, p>0,$$ we get
$$m^t+d^s\ge 2^{-t}(m+d^{s/t})^t=2^{-t}x^t.$$
Hence, we have
$$0\le \lim_{m+d\to\infty}\frac{m\ln(m+d)}{m^t+d^s}\le\lim_{x\to\infty}\frac{\max\{1,1/\gz\} x\ln
x}{2^{-t}x^t}=0,$$which means that $$
\lim_{m+d\to\infty}\frac{m\ln(m+d)}{m^t+d^s}=0. $$

Similarly, we can  prove
$$\lim\limits_{m+d\to\infty}\frac{d\ln(m+d)}{m^s+d^t}=0.$$ Lemma
\ref{l5.1} is proved.
\end{proof}

We remark that if  $e(0,d)=\|I_d\|=1$, then the normalized error
criterion and the absolute error criterion coincide.
 We emphasize that $$e(0,d)=1$$ for
the approximation problems $$I_d: H^{r,\square}(\ss)\rightarrow
L_2(\ss),\ \ r>0,\  \square\in \{*,+,\#\},$$ and $$ I_d:
G^{\az,\beta}(\ss)\rightarrow L_2(\ss),\ \ \az,\beta>0.$$ However,
we have $$e(0,d)=\big(\frac{d-1}2\big)^{-r}$$ for  the
approximation problems $$I_d: H^{r,-}(\ss)\rightarrow L_2(\ss),\ \
r>0.$$

\begin{thm}\label{t5.1}
Let $r>0$ and  $s,\,t>0$. Then

(1) the approximation problems
$$I_d: H^{r,\square}(\ss)\rightarrow L_2(\ss), \ \ \square\in \{*,+,\#\}$$ and for the absolute error criterion  the approximation problem
$$I_d: H^{r,-}(\ss)\rightarrow L_2(\ss)$$ are $(s,
t)$-weakly tractable if and only if $r>1/s$ and $t>0$ or $s>0$ and
$t>1$.

Specially, for the absolute error criterion the approximation
problems $I_d: H^{r,\square}(\ss)\rightarrow L_2(\ss), \
\square\in \{*,+,\#,-\}$
 are weakly tractable if and only if $r>1$, not uniformly
weakly tractable,  and do not suffer from the curse of
dimensionality.

(2) for the normalized error criterion,  the approximation problem
$$I_d: H^{r,-}(\ss)\rightarrow L_2(\ss)$$ suffers from the curse of
dimensionality.
\end{thm}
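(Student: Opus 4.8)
The plan is to convert the sharp two-sided estimates for $a_n(I_d:H^{r,\square}(\ss)\to L_2(\ss))$ from Theorems \ref{t4.1} and \ref{t4.2} into bounds on the information complexity $n(\vz,d)$, and then test the various tractability conditions against these bounds. Recall $e(n,d)=a_{n+1}(I_d)$, and that for $\square\in\{*,+,\#\}$ we have $e(0,d)=1$, so the absolute and normalized criteria coincide for those three norms. First I would fix $\square\in\{*,+,\#\}$ (the case $H^{r,-}$ with the absolute criterion differs only by the harmless constant factor $((d-1)/2)^{-r}$, which is absorbed when one passes to $\ln n(\vz,d)$). The key observation is that for $2^d\le n$ the bound $a_n\asymp_r d^{-r}n^{-r/d}$ gives, upon inverting $d^{-r}n^{-r/d}\le\vz$, a complexity of order $n(\vz,d)\asymp \max\{2^d,\,(c_r\vz d^r)^{-d/r}\}$ up to $r$-dependent constants; while for $n\le 2^d$ the preasymptotic bounds show $a_n$ is \emph{bounded below} by a quantity comparable to $d^{-r/2}(\log(1+d/\log n)/\log n)^{r/2}$ (or, for $H^{r,\#}$, by $(\log(1+d/\log n)/\log n)^{r}$, which for $n\le d$ is $\asymp 1$). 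So for $\vz$ moderately small relative to $d^{-r/2}$, one is forced into the regime $n\ge 2^d$, and then $\ln n(\vz,d)\asymp d + \tfrac dr\ln(1/\vz) + d\ln d$ up to lower order terms.

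With this two-regime description of $n(\vz,d)$ in hand, the next step is to check $(s,t)$-weak tractability, i.e. $\ln n(\vz,d)/((\vz^{-1})^s+d^t)\to 0$ as $\vz^{-1}+d\to\infty$. The dominant contributions to $\ln n(\vz,d)$ are $\tfrac dr\ln\vz^{-1}$ (coming from the $(c_r\vz d^r)^{-d/r}$ term) and $d\ln d$ (from $d\ln(1+m/d)$ with $m\asymp dn^{1/d}$, via Lemma \ref{l4.1}). The cross term $d\ln\vz^{-1}$ is exactly the one that Lemma \ref{l5.1} handles: with the roles $m\leftrightarrow$ something of size $\ln\vz^{-1}$ one gets $d\ln\vz^{-1}/((\vz^{-1})^s+d^t)\to 0$ provided $t>1$ \emph{or} $s>\ldots$; more precisely the term $\tfrac dr\ln\vz^{-1}$ divided by $(\vz^{-1})^s$ tends to $0$ for every $s>0$, while divided by $d^t$ it behaves like $\ln\vz^{-1}/d^{t-1}$, which (since simultaneously $\vz^{-1}\to\infty$ is \emph{not} forced) can fail to vanish unless $t>1$ — and the matching lower bound comes from choosing, along a subsequence, $\vz$ so that $\vz^{-1}\asymp d^{\,r}$ roughly, making $\ln n(\vz,d)\gtrsim d\ln d$, hence $\ln n/((\vz^{-1})^s+d^t)$ stays bounded away from $0$ when $rs\le 1$ and $t\le 1$. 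This is where the condition ``$r>1/s$ and $t>0$, or $s>0$ and $t>1$'' emerges. I would organize this as: (i) sufficiency — under either condition, split $\ln n(\vz,d)$ into its $d\ln d$, $d\ln\vz^{-1}$, and $d$ pieces and drive each quotient to zero using Lemma \ref{l5.1} (with the substitution $m=\ln\vz^{-1}$ or $m=d$ as appropriate); (ii) necessity — exhibit a sequence $(\vz_j,d_j)$ with $d_j\to\infty$ and $\vz_j^{-1}$ a suitable power of $d_j$ along which the quotient does not vanish whenever $rs\le 1$ and $t\le 1$.

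The specialization to \textbf{weak tractability} ($s=t=1$) is then immediate: the condition ``$r>1$ or $t>1$'' collapses to $r>1$. \textbf{Not uniformly weakly tractable} follows because uniform weak tractability would require $(s,t)$-weak tractability for \emph{all} $s,t>0$, in particular for small $s$ with $rs<1$ and $t<1$, which fails by part (i). That the problem \textbf{does not suffer from the curse of dimensionality} follows from the upper bound: for fixed $\vz$, $n(\vz,d)\le \max\{2^d,(c_r\vz d^r)^{-d/r}\}$, and since $d^r\to\infty$, for $d$ large one has $c_r\vz d^r>1$ so the second term is below $1$ and $n(\vz,d)\le 2^d$ exactly; but ``curse'' requires $n(\vz,d)\ge C(1+\gamma)^d$ for \emph{some fixed} $\vz_0$ — and $2^d$ is only an upper bound, so to rule out the curse I would instead use that $a_{n}(I_d)\asymp_r d^{-r/2}$ already for $n$ as small as $2$: hence $n(\vz,d)=1$ as soon as $d^{-r/2}\le c\vz$, i.e. for all $d\ge d_0(\vz)$ the complexity is bounded, so no exponential lower bound in $d$ can hold for a fixed threshold. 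For part (2), the \textbf{normalized} error criterion for $H^{r,-}$: here $e(0,d)=((d-1)/2)^{-r}$, so $n(\vz,d)=\min\{n: a_{n+1}\le \vz((d-1)/2)^{-r}\}$; since $a_n\asymp_r d^{-r}$ for all $1\le n\le 2^d$, the threshold $\vz((d-1)/2)^{-r}\asymp_r \vz d^{-r}$ is \emph{comparable} to $a_n$ itself uniformly over that whole range, so for $\vz$ below an absolute constant $\vz_0$ one needs $n>2^d$, giving $n(\vz,d)\ge 2^d$ — the curse of dimensionality. The main obstacle is the necessity direction in part (1): pinning down the exact exponents requires carefully coupling the rate at which $\vz\to 0$ to the growth of $d$ so that the preasymptotic lower bound $d^{-r/2}(\cdots)^{r/2}$ forces the complexity past $2^d$ and into the regime where $\ln n(\vz,d)\gtrsim \min\{d\ln d,\,d\ln\vz^{-1}\}$, and then verifying this is incompatible with $(s,t)$-weak tractability precisely when $rs\le1$ and $t\le1$.
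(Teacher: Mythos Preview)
Your plan goes through the preasymptotic/asymptotic formulas of Theorems \ref{t4.1}--\ref{t4.2} and then inverts them to control $n(\vz,d)$. The paper takes a shorter and cleaner route: it never inverts those formulas, but works directly with the exact identity $e(n,d)=r_{m,d}^\square$ for $C(d,m-1)<n+1\le C(d,m)$ (Theorem \ref{t2.1}). Given $\vz$, one chooses $m$ with $r_{m,d}^\square\le\vz<r_{m-1,d}^\square$, so that $n(\vz,d)\le C(d,m-1)<C(d,m)$, and then feeds the two bounds of Lemma \ref{l4.1} directly into Lemma \ref{l5.1}: the bound $\ln C(d,m)\le d(1+\ln(1+m/d))$ handles the case $t>1$, and the bound $\ln C(d,m)\le m(1+\ln(1+d/m))$ handles the case $rs>1$, using that $\vz^{-s}\ge (r_{m-1,d}^\square)^{-s}\ge (m-1)^{rs}$.

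Your sufficiency argument has a genuine gap precisely in the second case. The decomposition you propose, $\ln n(\vz,d)\lesssim d+\tfrac{d}{r}\ln\vz^{-1}+d\ln d$, comes only from the first Lemma \ref{l4.1} bound (and the $d\ln d$ term actually enters with the \emph{wrong sign} once you invert $d^{-r}n^{-r/d}\asymp\vz$). More importantly, the piece $\tfrac{d}{r}\ln\vz^{-1}/(\vz^{-s}+d^t)$ does \emph{not} tend to $0$ when $t\le 1$: take $\vz$ fixed and $d\to\infty$. So your upper bound is too weak to establish $(s,t)$-weak tractability for $s>1/r$ and small $t$. What is missing is exactly the second Lemma \ref{l4.1} bound, which gives $\ln n(\vz,d)\lesssim m\ln(m+d)$ with $m$ controlled by $\vz^{-1/r}$; for $\vz$ fixed this is only $O(\ln d)$, not $O(d)$, and Lemma \ref{l5.1} then applies.

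Two smaller points. For necessity you only need (and only get) $\ln n(\vz_d,d)\gtrsim d$, not $d\ln d$: the paper takes $\vz_d=r_{d+1,d}^\square$, giving $n(\vz_d,d)\ge C(d,d)\ge 2^d$ and $\vz_d^{-s}\asymp d^{rs}$, which already blocks the limit when $rs\le 1$ and $t\le 1$. And your ``no curse'' argument via $a_2\asymp d^{-r/2}$ works for $\square\in\{*,+\}$ but fails for $\square=\#$, where $a_2=2^{-r}$ is independent of $d$; there one should instead note that for fixed $\vz$ the relevant $m$ is bounded, so $n(\vz,d)\le C(d,m)$ grows only polynomially in $d$ (or simply deduce it from $(s,t)$-weak tractability with $t\le 1$). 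Your argument for part (2) is essentially the paper's.
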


 \begin{proof} (1) First we show that for the absolute error criterion
 the approximation problems $I_d: H^{r,\square}(\ss)\rightarrow L_2(\ss)$ are not $(s, t)$-weakly
tractable if $0<s\le 1/r$ and $0<t\le1$, where $r>0$,\ $\square\in
\{*,+,\#,-\}$.

We note that
$$e(n,d)=a_{n+1}(I_d:  H^{r,\square}(\ss)\rightarrow L_2(\ss)).$$It
follows from Theorem  \ref{t2.1} that for $C(d,m-1)<n+1\le
C(d,m),\ m\ge1 $, we have
\begin{equation*}e(n,d)=r_{m,d}^{\square},\end{equation*}where
$r_{m,d}^{\square}$ are given in Definition \ref{d2.2}. This
implies that
$$e(C(d,d)-1,d)=r_{d,d}^\square.$$
 Choose
$$\vz=\vz_d=r_{d+1,d}^\square<r_{d,d}^\square.$$  Then $$n(\vz_d,d)=\inf\{n\in\Bbb N\ |\ e(n,d)\le \vz_d\}\ge C(d,d)\ge 2^d.$$If $0<s\le 1/r$ and $0<t\le1$,
then we have $$\lim_{1/\vz_d+d\to\infty}\frac{\ln
(n(\vz_d,d))}{(\vz_d)^{-s}+d^t}\ge \lim_{d\to\infty}\frac
{d}{(r_{d+1,d}^\square)^{-1/r}+d}\neq0,$$ which implies that for
the absolute error criterion the approximation problems $I_d:
H^{r,\square}(\ss)\rightarrow L_2(\ss)$ are not $(s, t)$-weakly
tractable if $0<s\le 1/r$ and $0<t\le1$.

Next we show that if $s>1/r$ and $t>0$ or $s>0$ and $t>1$, then
for the absolute error criterion the approximation problems $I_d:
H^{r,\square}(\ss) \rightarrow L_2(\ss)$ are $(s, t)$-weakly
tractable.

Let $0<\vz<1$ be given and select $m\in\Bbb N_+$ such that
$$r_{m,d}^\square\le\vz< r_{m-1,d}^\square.$$Since
\begin{equation*}e(n,d)=r_{m,d}^\square\end{equation*}
for $C(d,m-1)<n+1\le C(d,m)$, we get $$n(\vz,d)\le
n(r_{m,d}^\square,d)=\inf\{n\in\Bbb N\ |\ e(n,d)\le
r_{m,d}^\square\}=C(d,m-1)< C(d,m).$$ For $s>0$ and $t>1$, it
follows from \eqref{4.1} and Lemma \ref{l5.1} that
$$\lim_{\frac 1\vz+d\to\infty}\frac{\ln n(\vz,d)}{\vz^{-s}+d^t}\le \lim_{m+d\to\infty}\frac {d(1+\ln(\frac{m+d}d))}{(r_{m-1,d}^\square)^{-s}+d^t}\le
\lim_{m+d\to\infty}\frac {d(1+\ln({m+d}))}{(m-1)^{rs}+d^t}=0.$$
For $s>1/r$ and $t>0$, it follows from \eqref{4.1} and Lemma
\ref{l5.1} that
$$\lim_{\frac 1\vz+d\to\infty}\frac{\ln n(\vz,d)}{\vz^{-s}+d^t}\le \lim_{m+d\to\infty}\frac {m(1+\ln(\frac{m+d}m))}{(r_{m-1,d}^\square)^{-s}+d^t}\le
\lim_{m+d\to\infty}\frac {m(1+\ln({m+d}))}{(m-1)^{rs}+d^t}=0.$$
Hence, for the absolute error criterion the approximation problems
$I_d: H^{r,\square}(\ss) \rightarrow L_2(\ss)$ are $(s, t)$-weakly
tractable if and only if  $s>1/r$ and $t>0$ or $s>0$ and $t>1$.

(2) It follows from Theorem \ref{t4.2} that for $0\le n\le 2^d$,
$$e(n,d)=a_{n+1}(I_d:  H^{r,-}(\ss)\rightarrow
L_2(\ss))\asymp d^{-r}.$$ This means that there is a positive
constant $c$ depending only on $r$ such that
$$e(2^d,d)\ge c\, e(0,d).$$
Choose $\vz\in(0,c)$. Then for the normalized criterion we have
$$n(\vz,d)=\min\{n\,|\,e(n,d)\leq \varepsilon\, e(0,d)\}\ge 2^d.$$ Hence, for the normalized error criterion the approximation problem
$I_d: H^{r,-}(\ss)\rightarrow L_2(\ss)$ suffers from the curse of
dimensionality.

 Theorem \ref{t5.1} is proved.
\end{proof}

\begin{thm}\label{t5.2}
Let $\az,\beta>0$. Then
 the approximation problem
$$I_d: G^{\az,\beta}(\ss)\rightarrow L_2(\ss)$$

(1)  is uniformly   weakly tractable.

(2)   is not polynomially tractable.

 (3)   is quasi-polynomially tractable if and only if $\az\ge 1$ and  the exponent
of quasi-polynomial tractability is $$t^{\rm qpol}=\sup_{m\in \Bbb
N}\frac m {1+\beta m^\az},\ \ \az\ge1.$$Specially, if $\az=1$,
then $t^{\rm qpol}=\frac1\beta$.

\end{thm}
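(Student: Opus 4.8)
The plan is to exploit the exact formula $e(n,d)=a_{n+1}(I_d:G^{\az,\beta}(\ss)\to L_2(\ss))=e^{-\beta k^\az}$ valid for $C(d,k-1)<n+1\le C(d,k)$ (Theorem \ref{t2.1}), so that the information complexity $n(\vz,d)$ is pinned down explicitly: if we choose $m=m(\vz)$ to be the least integer with $e^{-\beta m^\az}\le \vz$, i.e. $m\asymp (\ln(1/\vz)/\beta)^{1/\az}$, then $n(\vz,d)=C(d,m-1)$, and hence by Lemma \ref{l4.1},
\begin{equation*}
\ln n(\vz,d)\asymp \ln C(d,m-1)\asymp \min\Big\{m\ln\big(1+\tfrac dm\big),\ d\ln\big(1+\tfrac md\big)\Big\}\le \min\{m,d\}\,\big(1+\ln(m+d)\big).
\end{equation*}
This reduces all three parts to estimating this quantity as a function of $\vz^{-1}$ and $d$.

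For part (1), I would show uniform weak tractability by fixing arbitrary $s,t>0$ and bounding $\ln n(\vz,d)/((\vz^{-1})^s+d^t)$. Writing $L=\ln(1/\vz)$ so that $m\asymp (L/\beta)^{1/\az}$ and $\vz^{-1}=e^{L}$, we have $\ln n(\vz,d)\lesssim \min\{m,d\}(1+\ln(m+d))$. When $d$ is the smaller of the two, this is $\le d(1+\ln(m+d))$, and the denominator contains $d^t+e^{Ls}$; since $m$ grows only polynomially in $L$ while $e^{Ls}$ grows exponentially, $\ln(m+d)\le \ln m+\ln(1+d)\lesssim_\az \ln(1+L)+\ln(1+d)$, and the ratio tends to $0$ by an application of Lemma \ref{l5.1} (with the roles of $m,d$ adjusted, using $d^t$ against $d\ln(1+d)$ and $e^{Ls}$ dominating $d\ln(1+L)$). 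When $m$ is the smaller, $\ln n(\vz,d)\le m(1+\ln(m+d))\lesssim_\az L^{1/\az}(1+\ln(1+L)+\ln(1+d))$, and this is $o((\vz^{-1})^s+d^t)=o(e^{Ls}+d^t)$ because $L^{1/\az}\ln(1+L)=o(e^{Ls})$ and $L^{1/\az}\ln(1+d)=o(e^{Ls}+d^t)$ again via Lemma \ref{l5.1}. So the limit is $0$ for every $s,t>0$, giving uniform weak tractability.

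For part (2), non-polynomial-tractability, I would argue that along a suitable sequence $n(\vz,d)$ must grow faster than any power of $d$ times any power of $\vz^{-1}$: fix any target $\vz$ small enough that the corresponding $m=m(\vz)\ge 2$, and note $n(\vz,d)=C(d,m-1)\ge (1+\tfrac d{m-1})^{m-1}$ by Lemma \ref{l4.1}; with $m$ fixed this already grows like $d^{m-1}$, so no bound of the form $Cd^q\vz^{-p}$ can hold once $d^{m-1}$ exceeds $Cd^q$, i.e. once $m-1>q$ — but $m(\vz)\to\infty$ as $\vz\to 0$, so for every fixed $q$ there is a $\vz$ defeating the bound. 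For part (3), I would first show that if $0<\az<1$ then quasi-polynomial tractability fails, again using $n(\vz,d)\ge (1+d/(m-1))^{m-1}$ with $m-1\asymp (L/\beta)^{1/\az}$: this gives $\ln n(\vz,d)\gtrsim (m-1)\ln(1+d/(m-1))$, and choosing $d\asymp m$ makes this $\gtrsim m\asymp L^{1/\az}$, which for $\az<1$ is not $O((1+L)(1+\ln d))=O((1+L)(1+\ln m))=O((1+L)\ln(1+L))$ — so \eqref{t1.4} is violated. When $\az\ge 1$, I would verify \eqref{t1.4} directly: from $\ln n(\vz,d)\le \min\{m,d\}(1+\ln(m+d))$ one bounds, using $\min\{m,d\}\le \sqrt{md}\le \tfrac12(m+d)$ is too lossy — instead use $\min\{m,d\}\,\ln(1+\tfrac{\max}{\min})\le$ (by the elementary inequality $a\ln(1+b/a)\le a\ln(1+b/a)$, bounded by $(1+\ln a)(1+\ln b)$-type estimates) $\lesssim (1+\ln m)(1+\ln d)$, and then $\ln m\lesssim_\az \tfrac1\az\ln(1+L)+C_\az\lesssim (1+\ln\vz^{-1})$ absorbing constants, while $C\exp(t(1+\ln\vz^{-1})(1+\ln d))$ with $t=\sup_{m\in\Bbb N} m/(1+\beta m^\az)$ (finite precisely because $\az\ge 1$) is seen to dominate by re-parametrizing through $m$: since $\vz\ge e^{-\beta(m-1)^\az}$ forces $1+\ln\vz^{-1}\ge \beta(m-1)^\az$, and $n(\vz,d)=C(d,m-1)\le 2\binom{m-1+d}{d}\le 2 e^{(m-1)(1+\ln(1+d))}$, the exponent $(m-1)(1+\ln(1+d))$ is $\le t\,\beta(m-1)^\az(1+\ln(1+d))\le t(1+\ln\vz^{-1})(1+\ln d')$ for a suitably adjusted $d'$ exactly when $m-1\le t\beta(m-1)^\az$, i.e. when $t\ge (m-1)/(1+\beta(m-1)^\az)$ after including the constant $1$ — which holds for all $m$ by the definition of $t^{\rm qpol}$; tracking constants gives \eqref{t1.4} with exponent $t^{\rm qpol}$, and a matching lower bound for the exponent comes from testing along $d\asymp m$ as in the $\az<1$ argument, forcing $t\ge m/(1+\beta m^\az)$ for every $m$. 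The specialization $\az=1$ gives $\sup_m m/(1+\beta m)=1/\beta$.

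The main obstacle I expect is part (3): getting the \emph{exact} exponent $t^{\rm qpol}=\sup_{m}m/(1+\beta m^\az)$ rather than merely a finite one. The upper bound requires matching the combinatorial growth $\ln C(d,m-1)\le (m-1)(1+\ln(1+d))$ against $(1+\ln\vz^{-1})(1+\ln d)$ \emph{uniformly} in both $d$ and $m$, and the subtlety is that the "$1+$" in $1+\ln\vz^{-1}$ versus $\beta(m-1)^\az$ and the "$1+$" in $1+\ln d$ versus $\ln(1+d)$ must be handled carefully so that no stray constant inflates the supremum; this is exactly why the supremum is over $m\in\Bbb N$ (including small $m$, where the additive $1$'s matter most) rather than a limit. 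The lower bound for the exponent is comparatively routine once one uses the two-sided bound $(1+d/(m-1))^{m-1}\le C(d,m-1)$ and optimizes the choice of $d$ relative to $m$.
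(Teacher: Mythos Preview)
Your approach to parts (1) and (2) is correct and close to the paper's. For (2) your argument is actually cleaner than the paper's: you fix $\vz$ with $m(\vz)-1>q$ and let $d\to\infty$, whereas the paper constructs a diagonal sequence $\vz_d$ with $m_d\asymp(\ln d)^{1/\az}$. Both work; yours is shorter.

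The gap is in part (3), in both directions for the \emph{sharp} exponent. For the upper bound, your route via
\[
\ln C(d,m-1)\le (m-1)\bigl(1+\ln(1+d)\bigr)
\]
and then $(m-1)\le t_0(1+\beta(m-1)^\az)\le t_0(1+\ln\vz^{-1})$ leaves you with a factor $1+\ln(1+d)$ in place of $1+\ln d$; the ratio of these is bounded by $1+\ln 2$, so you only get $t^{\rm qpol}\le (1+\ln 2)\,t_0$, not $t_0$. (Your earlier attempted inequality $\min\{m,d\}\ln(1+\max/\min)\lesssim (1+\ln m)(1+\ln d)$ is in fact false: take $m=d$.) The paper avoids this entirely by not passing through logarithms. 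It bounds the \emph{ratio} directly:
\[
\frac{n(\vz,d)}{\exp\bigl(t_0(1+\ln d)(1+\ln\tfrac1\vz)\bigr)}\le \frac{C(d,m)}{(ed)^{t_0(1+\beta m^\az)}}\le \frac{C(d,m)}{(ed)^m},
\]
using $t_0(1+\beta m^\az)\ge m$, and then checks case by case (using Lemma \ref{l4.1}) that $C(d,m)/(ed)^m\le 2$ for all $m,d$. The point is that the ``$e$'' in $ed=e^{1+\ln d}$ exactly absorbs the combinatorial $e^m$ from $C(d,m)\le e^m(1+m/d)^d$, so nothing leaks into the exponent.

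For the lower bound on the exponent your choice $d\asymp m$ does not work when $\az\ge 1$: with $d=m$ one has $\ln n(\vz,d)\asymp m$ while $(1+\ln\tfrac1\vz)(1+\ln d)\asymp (1+\beta m^\az)\ln m$, and the resulting constraint $t\gtrsim m/\bigl((1+\beta m^\az)\ln m\bigr)$ tends to $0$ and gives nothing. The correct move (which the paper carries out) is to \emph{fix} $m$ at the maximizer $m_0$ of $m/(1+\beta m^\az)$ when $\az>1$, choose $\vz_d\uparrow e^{-\beta m_0^\az}$, and let $d\to\infty$: then $n(\vz_d,d)=C(d,m_0)\asymp d^{m_0}$ while $\exp\bigl(t(1+\ln d)(1+\ln\tfrac1{\vz_d})\bigr)\asymp (ed)^{t(1+\beta m_0^\az)}$, forcing $t\ge m_0/(1+\beta m_0^\az)=t_0$. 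For $\az=1$ the supremum is not attained and one instead uses a sequence $m_d\to\infty$ with $m_d=o(d)$ (the paper takes $m_d\asymp\ln d$). Your $d\asymp m$ idea \emph{is} the right one for showing failure of quasi-polynomial tractability when $0<\az<1$, since there $m\asymp L^{1/\az}$ dominates $L\ln L$; it is only for the sharp exponent when $\az\ge1$ that it breaks down.
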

\begin{proof}  (1) For $0<\vz<1$, we choose $m\in\Bbb N$ such that $$e^{-\beta(m+1)^{\az}}\le \vz<e^{-\beta m^{\az}}.$$Since for $C(d,m)<n+1\le
C(d,m+1),\ m\in\Bbb N $,
\begin{equation*}e(n,d)=a_{n+1}(I_d:G^{\az,\beta}(\ss)\to L_2(\ss))=e^{-\beta(m+1)^{\az}},\end{equation*}we get
$$n(\vz,d)\le n(e^{-\beta(m+1)^{\az}}, d)=C(d,m).$$
For any $s,t>0$, by \eqref{4.1} we have
$$\frac{\ln(n(\vz,d))}{\vz^{-s}+d^t}\le
\frac{\ln(C(d,m))}{e^{s\beta m^\az}+d^t}\le
\frac{m(1+\ln(m+d))}{e^{s\beta m^\az}+d^t}.$$ We note that
$$\lim_{m\to\infty}\frac{m^2}{e^{s\beta
m^\az}}=\lim_{x\to+\infty}\frac{x^{2/\az}}{e^{s\beta x}}=0.$$ This
implies that there exists a positive constant $c$ such that
$$e^{s\beta m^\az}\ge c m^2. $$It follows from Lemma \ref{l5.1} that
$$\lim_{\frac1\vz+d\to\infty}\frac{\ln(n(\vz,d))}{\vz^{-s}+d^t}\le
\lim_{m+d\to\infty}\frac {m\ln(m+d)}{cm^2+d^t}=0.$$ Hence, the
approximation problem $I_d: G^{\az,\beta}(\ss)\rightarrow
L_2(\ss)$   is uniformly   weakly tractable for any $\az,\beta>0$.

(2) For any $p,q>0$, we choose $$\vz_d=e^{-\beta (m_d+1)^\az},\ \
m_d=[(\frac q{\beta p}\ln d)^{1/\az}]-1,$$where $[x]$ denotes the
largest integer not exceeding $x\in\Bbb R$. Such selection is to
make $$(\vz_d)^{-p}\le d^q.$$ Since for $C(d,m_d)<n+1\le
C(d,m_d+1),\ m\in\Bbb N $,
\begin{equation*}e(n,d)=a_{n+1}(I_d:G^{\az,\beta}(\ss)\to L_2(\ss))=\vz_d,\end{equation*}we
get
$$n(\vz_d,d)=C(d,m_d)\ge (1+\frac d{m_d})^{m_d}\ge (\frac d{m_d})^{m_d}=e^{m_d(\ln d-\ln m_d)} .$$
Since $\lim\limits_{d\to\infty}m_d=+\infty$ and
$\lim\limits_{d\to\infty}\frac{\ln m_d}{\ln d}=0$, we get for
sufficiently large $d$, $$\ln d-\ln m_d\ge \frac12\ln d\ \ {\rm
and}\ \ \frac {m_d}2\ge 2q+1.$$ It follows that
$$\lim_{d\to\infty}\frac{n(\vz_d,d)}{(\vz_d)^{-p}d^q}\ge
\lim_{d\to\infty}\frac{e^{(2q+1)\ln
d}}{d^{2q}}=\lim_{d\to\infty}d= +\infty,$$which implies that
\eqref{t1.5} does not hold. Hence, the approximation problem $I_d:
G^{\az,\beta}(\ss)\rightarrow L_2(\ss)$   is not polynomially
tractable for any $\az,\beta>0$.

(3) Let $0<\az<1$ and $\beta>0$. We choose $\vz_d$ and $m_d$ as
above, i.e.,
$$\vz_d=e^{-\beta (m_d+1)^\az},\ \
m_d=[(\frac q{\beta p}\ln d)^{1/\az}]-1.$$ Then for sufficiently
large $d$, $$n(\vz_d,d)=C(d,m_d)\ge e^{\frac12m_d\ln
d}=d^{m_d/2}.$$ It follows that for any $t>0$,
$$\lim_{d\to\infty}\frac{n(\vz_d,d)}{e^{t(1+\ln d)(1+\ln
\frac 1{\vz_d})}}\ge \lim_{d\to\infty}\frac{d^{m_d/2}}{e^{4t
2^\az\beta (m_d)^\az\ln d}}=\lim_{d\to\infty}d^{\frac{m_d}2-42^\az
t\beta (m_d)^\az}=+\infty,
$$which means that \eqref{t1.4} is not valid.
Hence,  the approximation problem $I_d:
G^{\az,\beta}(\ss)\rightarrow L_2(\ss)$   is not
quasi-polynomially tractable if  $0<\az<1$.

Let $\az\ge 1$ and $\beta>0$. We set
\begin{equation}\label{5.2}t_0=\sup_{m\in \Bbb N}\frac m {1+\beta
m^\az}.\end{equation}
 For any $0<\vz<1$, we choose
$m\in\Bbb N$ such that $$e^{-\beta(m+1)^{\az}}\le \vz<e^{-\beta
m^{\az}}.$$Then we have
$$n(\vz,d)\le n(e^{-\beta(m+1)^{\az}}, d)=C(d,m).$$
We note from \eqref{5.2} that $$\sup_{d\in\Bbb N_+,\
\vz\in(0,1)}\frac{n(\vz,d)}{e^{t_0(1+\ln d)(1+\ln \frac
1{\vz})}}\le \sup_{d\in\Bbb N_+,\ m\in \Bbb N}
\frac{C(d,m)}{(ed)^{t_0(1+\beta m^\az)}}\le \sup_{d\in\Bbb N_+,\
m\in \Bbb N} \frac{C(d,m)}{(ed)^m}.
$$
From \eqref{2.2} we get $$C(d,0)=1,\ C(d,1)=d+2,\ C(1,m)=2m+1,$$
which yields
$$\sup_{d\in\Bbb N_+,\ m=0,1} \frac{C(d,m)}{(ed)^m}\le 2 \ \ {\rm
and}\ \ \sup_{m\in \Bbb N,\ d=1} \frac{C(d,m)}{(ed)^m}\le 2.$$ If
$m,d\ge 2$, then by \eqref{4.1} we get
$$\frac{C(d,m)}{(ed)^m}\le
\frac{e^m(1+d/m)^m}{(ed)^m}=(\frac1d+\frac1m)^m\le1.$$ This means
that for any $d\in\Bbb N_+$ and any $\vz\in(0,1)$,
$$n(\vz,d)\le 2e^{t_0(1+\ln d)(1+\ln \frac
1{\vz})}.$$Hence, the approximation problem $I_d:
G^{\az,\beta}(\ss)\rightarrow L_2(\ss)$   is  quasi-polynomially
tractable if  $\az\ge 1$ and the exponent $t^{\rm qpol}$ of
quasi-polynomial tractability satisfies $$t^{\rm qpol}\le
t_0=\sup_{m\in \Bbb N}\frac m {1+\beta m^\az}.$$

Let  $\az=1$. Then $t_0=1/\beta$. For $t<t_0=1/\beta$, we  choose
$\vz_d$ and $m_d$ as above. Such $m_d$ satisfies
$\lim\limits_{d\to\infty}m_d=+\infty$ and  $\lim\limits_{d\to
\infty}\frac{\ln m_d}{\ln d}=0$. Then $n(\vz_d,d)=C(d,m_d)$. It
follows that
\begin{align*}\lim_{d\to \infty}\frac{n(\vz_d,d)}{e^{t(1+\ln
d)(1+\ln \frac1{\vz_d})}}&=\lim_{d\to
\infty}\frac{C(d,m_d)}{e^{t(1+\ln d)(1+\beta
(m_d+1))}}\\
&\ge \lim_{d\to \infty}\frac{(1+d/m_d)^{m_d}}{(ed)^{t+t\beta
(m_d+1)}}.
\end{align*}
Let $\gz$ be such that $\gz\in (t\beta,1)$. Then for  sufficiently
large $d$, we have
$$\ln d-\ln m_d\ge \gamma \ln d,$$ which  means that for  sufficiently large
$d$, $$(1+d/m_d)^{m_d}\ge e^{m_d(\ln d-\ln m_d)}\ge e^{\gamma
m_d\ln d}=d^{\gamma m_d}.$$ Using the facts $$a^b=d^{b\frac{\ln
a}{\ln d}},\ \ \gz-t\beta>0, \ \ {\rm and}\ \
\lim_{d\to\infty}m_d=+\infty,$$we get
\begin{align*}\lim_{d\to \infty}\frac{n(\vz_d,d)}{e^{t(1+\ln
d)(1+\ln\frac1{\vz_d})}}&\ge\lim_{d\to\infty} \frac{d^{\gamma
m_d}}{(ed)^{t+t\beta (m_d+1)}}\\
&=\lim_{d\to\infty}d^{(\gz-t\beta-\frac{t\beta}{\ln
d})m_d-(t+t\beta)(1+\frac1{\ln d})}=+ \infty,
\end{align*}which means that \eqref{t1.4} is not true with
$t<t_0=1/\beta$. Hence, we have for $\az=1$,
$$t^{\rm qpol}=t_0=1/\beta.$$

Let $\az>1$. Since $\lim\limits_{m\to\infty}\frac m{1+\beta
m^\az}=0$, there exists a positive integer  $m_0$ depending only
on $\az>1$ and $\beta>0$ such that
$$t_0=\sup_{m\in \Bbb N}\frac m {1+\beta m^\az}=\frac {m_0} {1+\beta
(m_0)^\az}.$$ We choose $\vz_d\in (0,1)$ such that
$$e^{-\beta(m_0+1)^{\az}}\le \vz_d<e^{-\beta (m_0)^{\az}}\ \  {\rm
and}\ \ \lim_{d\to\infty}\vz_d=e^{-\beta (m_0)^{\az}}.$$ Since for
$C(d,m)<n+1\le C(d,m+1),\ m\in\Bbb N $,
\begin{equation*}e(n,d)=a_{n+1}(I_d:G^{\az,\beta}(\ss)\to L_2(\ss))=e^{-\beta(m+1)^{\az}},\end{equation*}we get
$$n(\vz_d,d)=\inf\{n\in\Bbb N\ |\ e(n,d)\le \vz_d\}=C(d,m_0).$$ We have for any $t<t_0=\frac {m_0} {1+\beta (m_0)^\az}$,
\begin{align*}\lim_{d\to \infty}\frac{n(\vz_d,d)}{e^{t(1+\ln
d)(1+\ln\frac1{\vz_d})}}=\lim_{d\to\infty}
\frac{C(d,m_0)}{(ed)^{t(1+\ln \frac 1{\vz_d})}}\ge
\lim_{d\to\infty} \frac {d^{m_0-t(1+\ln\frac1{\vz_d})\frac{\ln
ed}{\ln d}}}{m_0\,!}.
\end{align*}Noting that
$$\lim_{d\to\infty}\Big(m_0-t(1+\ln\frac1{\vz_d})\frac{\ln
ed}{\ln d}\Big)=(1+\beta (m_0)^\az)(t_0-t)>0, $$ we obtain
$$\lim_{d\to \infty}\frac{n(\vz_d,d)}{e^{t(1+\ln
d)(1+\ln\frac1{\vz_d})}}=+\infty,$$ which means that \eqref{t1.4}
is not true with $t<t_0$. Hence, we have for $\az>1$,
$$t^{\rm qpol}=t_0.$$

The proof of Theorem \ref{t5.2} is complete.
\end{proof}

\begin{rem} From \eqref{4.12} we know that
$$e(n,d)\le C_1(d)e^{-C_2(d)n^{\az/d}},$$where $C_1(d),\,C_2(d)$ are two constants depending on $d$ but not on $n$. This means that the approximation problem $I_d:
G^{\az,\beta}(\ss)\rightarrow L_2(\ss)$ has exponential
convergence (see \cite{PW}, \cite{IKPW}). So we can consider
exponential convergence tractability. Exponential convergence
tractability has been considered in many papers (see for example,
\cite{PW}, \cite{IKPW}, \cite{PPW}). For $t,s>0$, we say the
approximation problem is $(t,\ln^s)$-weakly tractable (see
\cite{PPW}),  if
\begin{equation*} \lim_{\varepsilon ^{-1}+d\rightarrow \infty
}\frac{\ln n(\varepsilon ,d)}{(\ln\varepsilon ^{-1})^s+d^t}=0.
\end{equation*}Otherwise, the approximation problem is called $(t,\ln^{s})$-intractable. Specially if $s=t=1$, $(t,\ln^{s})$-weak tractability is just
 exponential convergence-weak tractability.
Similarly we can define exponential convergence-uniformly weak
tractability.

Let  $n^{G^{\az,\beta}}(\vz,d)$ and  $n^{H^{\az,\#}}(\vz,d)$ be
the information-complexities of the approximation problems $I_d:
G^{\az,\beta}(\ss)\rightarrow L_2(\ss)$ and $I_d:
H^{\az,\#}(\ss)\rightarrow L_2(\ss)$.

For $0<\vz<1$, we choose $m\in\Bbb N$ such that
$$e^{-\beta(m+1)^{\az}}\le \vz<e^{-\beta m^{\az}}.$$Then
$$n^{G^{\az,\beta}}(\vz,d)=C(d,m).$$
It follows that
$$\lim_{\frac 1\vz+d\to\infty}\frac{\ln(n^{G^{\az,\beta}}(\vz,d))}{(\ln
\vz^{-1})^s+d^t}= \lim_{m+d\to\infty}\frac{\ln(C(d,m))}{\beta^s
m^{s\az}+d^t}.$$ Similarly, for $0<\vz<1$, let $k\in\Bbb N$ be
such that $$(k+2)^{-\az}\le \vz< (k+1)^{-\az}.  $$Then we have
$$n^{H^{\az,\#}}(\vz,d)=C(d,k),$$and
$$\lim_{\frac 1\vz+d\to\infty}\frac{\ln(n^{H^{\az,\#}}(\vz,d))}{
\vz^{-s}+d^t}= \lim_{k+d\to\infty}\frac{\ln(C(d,k))}{
(k+1)^{s\az}+d^t}.$$ Clearly, $$\lim_{\frac
1\vz+d\to\infty}\frac{\ln(n^{G^{\az,\beta}}(\vz,d))}{(\ln
\vz^{-1})^s+d^t}=0\ \ {\rm if \ and \ only\ if}\ \ \lim_{\frac
1\vz+d\to\infty}\frac{\ln(n^{H^{\az,\#}}(\vz,d))}{
\vz^{-s}+d^t}=0,$$which means that the approximation problem $I_d:
G^{\az,\beta}(\ss)\rightarrow L_2(\ss)$  is $(t,\ln^s)$-weakly
tractable if and only if the approximation problem $I_d:
H^{\az,\#}(\ss)\rightarrow L_2(\ss)$ is $(s,t)$-weakly tractable,
and if and only if   $\alpha>1/s$ and $t>0$ or $s>0$ and $t>1$.
  Hence, the approximation problem
$I_d: G^{\az,\beta}(\ss)\rightarrow L_2(\ss)$ is
$(t,\ln^s)$-weakly tractable if and only if   $\alpha>1/s$ and
$t>0$ or $s>0$ and $t>1$. Specially, it  is exponential
convergence-weakly tractable if and only if $\az>1$.

Using the same reasoning, we can prove  that the approximation
problem $I_d: G^{\az,\beta}(\ss)\rightarrow L_2(\ss)$ is
exponential convergence-uniformly weakly tractable if and only if
the approximation problem $I_d: H^{\az,\#}(\ss)\rightarrow
L_2(\ss)$ is uniformly weakly tractable. However, the
approximation problem $I_d: H^{\az,\#}(\ss)\rightarrow L_2(\ss)$
is not uniformly weakly tractable. Hence, the approximation
problem $I_d: G^{\az,\beta}(\ss)\rightarrow L_2(\ss)$ is not
exponential convergence-uniformly weakly tractable for any $\az,
\beta>0$.
\end{rem}

\section{Asymtotics, Preasymtotics, and tractability  on the ball}

\subsection{Sobolev spaces and Gevrey type spaces on the ball}

\

 Let $\bb=\{x\in\mathbb{R}^d:\    \  |x|\le 1\}$ denote the unit
ball in $\mathbb{R}^d$,
 where $x\cdot y$ is
the usual inner product, and $|x|=(x\cdot x)^{1/2}$ is the usual
Euclidean norm.   For the weight $W_\mu(x)=(1-|x|^2)^{\mu-1/2}\
(\mu\ge 0)$, denote by $L_{2,\mu}(\bb) \equiv L_2(\bb,
W_\mu(x)\,dx)$ the space of measurable functions defined on $\bb$
with the finite norm $$ \|f|L_{2,\mu}(\bb)
\|:=\Big(\int_{\bb}|f(x)|^2\,W_\mu(x)dx\Big)^{1/2}.$$ When
$\mu=1/2$, $W_\mu(x)\equiv 1$, and $L_{2,1/2}(\bb)$ recedes to
$L_2(\bb)$.

We denote by $\Pi_n^d(\bb)$ the space of all polynomials in $d$
variables of degree at most $n$ restricted to $\bb$, and by
$\mathcal{V}_{n,\mu}^d(\bb)$ the space of all polynomials of
degree $n$ which are orthogonal to polynomials of low degree in
$L_{2,\mu}(\bb)$. Note that
\begin{equation}\label{6.1} N(n,d):={\rm dim}\,\mathcal{V}_{n,\mu}^d(\bb)=\binom {n+d-1} n.\end{equation} and
\begin{equation}\label{6.2} D(n,d):={\rm dim}\,\Pi_n^d(\bb)=\binom {n+d} n.\end{equation}
It is well known (see \cite[p. 38 or p. 229]{DX} or \cite[p.
268]{DaiX}) that the spaces $\mathcal{V}_{n,\mu}^d(\bb)$ are just
the eigenspaces corresponding to the eigenvalues $-n(n+2\mu+d-1)$
of the second-order differential operator
$$D_{\mu,d}:=\triangle-(x\cdot\nabla)^2-(2\mu+d-1)\,x\cdot
\nabla,$$ where the $\triangle$ and $\nabla$ are the Laplace
operator and gradient operator respectively. More precisely,
$$D_{\mu,d} P=-n(n+2\mu+d-1)P\ {\rm for} \ P\in \mathcal{V}_{n,\mu}^d(\bb).$$
Also, the spaces $\mathcal{V}_{n,\mu}^d(\bb)$  are mutually
orthogonal in $L_{2,\mu}(\bb)$ and
\begin{equation*}
 L_{2,\mu}(\bb)= \bigoplus_{n=0}^\infty
\mathcal{V}_{n,\mu}^d(\bb), \quad \ \ \ \  \Pi_n^d(\bb)=
\bigoplus_{k=0}^n \mathcal{V}_{n,\mu}^{d}(\bb) .
\end{equation*} Let
$$\{\phi_{nk}\equiv \phi_{nk}^{d,\mu}\ |\ k=1,\dots, N(n,d)\}$$ be a
fixed orthonormal basis for $\mathcal{V}_{n,\mu}^d(\bb)$ in
$L_{2,\mu}(\bb)$. Then
$$\{\phi_{nk}\ |\ k=1,\dots, N(n,d),\ n=0,1,2,\dots\}$$ is
an orthonormal basis for $L_{2,\mu}(\bb)$. Evidently,  any $ f\in
L_{2,\mu}(\bb)$ can be expressed by its Fourier series
\begin{equation*}f=\sum_{n=0}^\infty Proj_{n,\mu} f=\sum_{n=0}^\infty{\sum_{k=1}^{N(n,d)}} \lb
\phi_{nk},f\rb_\mu\phi_{nk}, \end{equation*}
 where $Proj_{n,\mu}(f)={\sum\limits_{k=1}^{N(n,d)}} \lb
\phi_{nk},f\rb_\mu\phi_{nk}$ is the orthogonal  projection of $f$
from $L_{2,\mu}(\bb)$ onto $\mathcal{V}_{n,\mu}^d(\bb)$, and
$$ \lb
\phi_{nk},f\rb_\mu :=\int_{\bb} f(x) \phi_{nk}W_\mu(x)\, dx$$ are
the  Fourier coefficients of $f$. We have the following Parseval
equality:
$$\|f\,|\,L_{2,\mu}(\bb)\|=\Big(\sum_{n=0}^\infty{\sum_{k=1}^{N(n,d)}} |\lb
\phi_{nk},f\rb_\mu|^2\Big)^{1/2}.$$

\begin{defn}\label{d6.1}Let $\Lz=\{\lz_k\}_{k=0}^\infty$ be  a non-increasing
positive sequence with $\lim\limits_{k\to\infty}\lz_k=0$, and let
$T^\Lz$ be a multiplier operator on $L_{2,\mu}(\bb)$ defined by
$$T^\Lz (f)=\sum_{n=0}^\infty{\sum_{k=1}^{N(n,d)}} \lz_n\lb
\phi_{nk},f\rb_\mu\phi_{nk}.$$
 We define the
multiplier space $H_{\mu}^\Lz(\bb)$ by
\begin{align*}H_{\mu}^\Lz(\bb)&=\Big\{T^\Lz f\,    \big|\, f\in L_{2,\mu}(\bb) \
{\rm and }\ \|T^\Lz
f\,\big|\,H_{\mu}^\Lz(\bb)\|=\|f|L_{2,\mu}(\bb)\|<\infty\Big\}\\&:=\Big\{f\in
L_{2,\mu}(\bb)\,    \big|\,   \|f\,\big|\,H_{\mu}^\Lz(\bb)\|:=
\Big(\sum_{n=0}^{\infty}\frac1{\lz_n^2}\sum_{k=1}^{N(n,d)} |\lb
\phi_{nk},f\rb_\mu|^2\Big)^{1/2}<\infty\Big\}.
\end{align*}\end{defn}

 Similar to the case on the sphere, we can define the Sobolev
 spaces $H^{r,\square}_\mu(\bb),\ r>0,\ \square\in\{*,+,\#,-\}$
 and the Gevrey type spaces $G_\mu^{\alpha,\beta}(\bb),\
 \az,\beta>0$ analogously. However, in this section we deal only
 with the  most important and interesting case $\mu=1/2$,  and the corresponding spaces $H^{r,*}(\bb)$ and $G^{\alpha,\beta}(\bb)$.

We remark that  there is no difference for the cases $\mu=1/2$ and
$\mu\neq 1/2$ concerning with results about strong equivalences,
asymptotics and preasymptotics, and tractability. We also remark
that the corresponding results on $H^{r,\square}_\mu(\bb),\
\square\in\{+,\#,-\}$ is similar to the ones on
$H^{r,\square}(\ss),\  \square\in\{+,\#,-\}$.  The proofs go
through with hardly any change.

\begin{defn}\label{d6.2}Let $r>0$. The  Sobolev space $H^{r,*}(\bb) \equiv H_{1/2}^{r,*}(\bb)$ is the collection of all $f\in L_2(\bb)$ such
that
$$\|f\,\big|\,H^{r,*}(\bb)\|:=
\Big(\sum_{n=0}^{\infty}\frac1{1+(n(n+d))^r}\sum_{k=1}^{N(n,d)}
|\lb \phi_{nk},f\rb_{1/2}|^2\Big)^{1/2}<\infty,$$ If we set
$\tilde \Lz^*=\{\tilde r_{k,d}^*\}_{k=0}^\infty,\ \tilde
r_{k,d}^*=(1+(k(k+d))^r)^{-1/2}$, then the Sobolev space
$H^{r,*}(\bb)$ is just the multiplier space
$H_{1/2}^{\tilde\Lz^*}(\bb)$.
\end{defn}

\begin{rem} Given $r>0$, we define the fractional power $(-D_{1/2,d})^{r/2}$ of
the operator $-D_{1/2,d}$
 on $f$  by
$$(-D_{1/2,d})^{r/2} (f)= \sum_{k=1}^\infty (k(k+d))^{r/2} Proj_{k,1/2}(f) $$
in the sense of distribution. Then for $f\in H^{r,*}(\bb)$, we
have
$$\|f\,\big|\,H^{r,*}(\bb)\|= \Big(\|f\,|\,L_{2}(\bb)\|^2+\|(-D_{1/2,d})^{r/2}
(f)\,|\,L_{2}(\bb)\|^2\Big)^{1/2}.$$
\end{rem}

\begin{defn}\label{d6.3}Let $\az,\beta>0$. The Gevrey type space $G^{\az,\beta}(\bb)\equiv G_{1/2}^{\az,\beta}(\bb) $ is the collection of all $f\in L_2(\bb)$ such
that
$$\|f\,\big|\,G^{\az,\beta}(\bb)\|:=
\Big(\sum_{n=0}^{\infty}e^{2\beta n^\az}\sum_{k=1}^{N(n,d)} |\lb
\phi_{nk},f\rb_{1/2}|^2\Big)^{1/2}<\infty.$$ If we set
$\Lz_{\az,\beta}=\{e^{-\beta k^\az}\}_{k=0}^\infty$, then the
Gevrey type space $G^{\az,\beta}(\bb)$ is just the multiplier
space $H_{1/2}^{\Lz_{\az,\beta}}(\bb)$. \end{defn}

\subsection{Strong equivalences  on the
ball}

\

Let $\Lz=\{\lz_k\}_{k=0}^\infty$ be  a non-increasing positive
sequence with $\lim\limits_{k\to\infty}\lz_k=0$.  We note from
Definition \ref{d6.1} that the mutiplier space $H_{\mu}^\Lz(\bb)\
(\mu\ge 0)$ is also of form $H^\tau=(L_{2,\mu}(\bb))^\tau$ with
\begin{align*}
\{\tau_k\}_{k=0}^\infty=\{\lz_{0,d},\underbrace{\lz_{1,d},\cdots,\lz_{1,d}}_{N(1,d)},\underbrace{\lz_{2,d},
\cdots,\lz_{2,d}}_{N(2,d)},\cdots,\underbrace{\lz_{k,d},\cdots,\lz_{k,d}}_{N(k,d)},\cdots\},
\end{align*} where $N(m,d)$ and $D(m,d)$ are given in \eqref{6.1}
and \eqref{6.2}. According to Lemma \ref{l2.1} we obtain

\begin{thm} \label{t6.1}For $D(k-1,d)<n\le D(k,d),\ k=0,1,2,\dots,$ we have
$$a_n(T^\Lz: L_{2,\mu}(\bb)\to L_{2,\mu}(\bb))=a_n(I_d: H_{\mu}^\Lz(\bb)\to L_{2,\mu}(\bb))=\lz_{k,d},\ \ \mu\ge 0.$$
where we set $D(-1,d)=0$.

Specially, let $r>0$ and $\az,\beta>0$. Then for $D(k-1,d)<n\le
D(k,d),\ k=0,1,2,\dots,$ we have
\begin{equation}\label{6.3}a_n(I_d: H^{r,*}(\bb)\to L_2(\bb))=(1+(k(k+d))^r)^{-1/2},\end{equation}and
 \begin{equation}\label{6.4}a_n(I_d: G^{\az,\beta}(\bb)\to L_2(\bb))=e^{-\beta k^\az}.\end{equation} \end{thm}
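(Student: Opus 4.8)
The plan is to reduce everything to the abstract singular-value computation already established in Lemma~\ref{l2.1}, exactly as was done on the sphere in Theorem~\ref{t2.1}. First I would recall from Definition~\ref{d6.1} that the multiplier space $H_\mu^\Lz(\bb)$ is, by construction, of the form $H^\tau = (L_{2,\mu}(\bb))^\tau$ relative to the orthonormal basis $\{\phi_{nk}\}$ of $L_{2,\mu}(\bb)$, where the weight sequence $\tau=\{\tau_k\}_{k=0}^\infty$ is the non-increasing rearrangement obtained by repeating each value $\lz_{n,d}$ exactly $N(n,d)$ times, as displayed just before the statement. The monotonicity $\tau_1\ge\tau_2\ge\cdots$ is guaranteed because $\Lz$ is non-increasing. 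Thus all hypotheses of Lemma~\ref{l2.1} are met, and that lemma gives $a_n(I_d:H_\mu^\Lz(\bb)\to L_{2,\mu}(\bb))=\tau_n$ for every $n\in\Bbb N_+$.

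The second step is purely bookkeeping: I must identify the index $n$ for which $\tau_n=\lz_{k,d}$. Since the blocks have lengths $N(0,d),N(1,d),\dots$ and $D(k,d)=\sum_{j=0}^k N(j,d)$ by \eqref{6.2} (this is the identity $\Pi_k^d(\bb)=\bigoplus_{j=0}^k\mathcal V_{j,\mu}^d(\bb)$ recorded in Section~6.1), the value $\lz_{k,d}$ occupies precisely the positions $n$ with $D(k-1,d)<n\le D(k,d)$, with the convention $D(-1,d)=0$. Hence $a_n(I_d:H_\mu^\Lz(\bb)\to L_{2,\mu}(\bb))=\lz_{k,d}$ on that range of $n$. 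The equality $a_n(T^\Lz:L_{2,\mu}(\bb)\to L_{2,\mu}(\bb))=a_n(I_d:H_\mu^\Lz(\bb)\to L_{2,\mu}(\bb))$ is immediate from the definition of the multiplier space, since $T^\Lz$ is an isometry of $L_{2,\mu}(\bb)$ onto $H_\mu^\Lz(\bb)$ composed with the embedding, so the two operators have the same singular numbers.

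Finally, the two special cases \eqref{6.3} and \eqref{6.4} follow by substituting the explicit multiplier sequences: for $H^{r,*}(\bb)$ one takes $\lz_{k,d}=\tilde r_{k,d}^*=(1+(k(k+d))^r)^{-1/2}$ from Definition~\ref{d6.2}, and for $G^{\az,\beta}(\bb)$ one takes $\lz_{k,d}=e^{-\beta k^\az}$ from Definition~\ref{d6.3}; in both cases these are non-increasing positive sequences tending to $0$, so the general formula applies verbatim. There is essentially no obstacle here: the only point requiring a word of care is the correct alignment of the block lengths $N(k,d)$ with the partial sums $D(k,d)$ and the endpoint convention $D(-1,d)=0$, which makes the half-open interval $D(k-1,d)<n\le D(k,d)$ the right one; everything else is a direct transcription of the sphere argument with $Z(d,k),C(d,k)$ replaced by $N(k,d),D(k,d)$.
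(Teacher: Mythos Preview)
Your proposal is correct and follows exactly the paper's approach: the paper simply observes (in the paragraph immediately preceding the theorem) that $H_\mu^\Lz(\bb)$ has the form $H^\tau$ with the block-repeated sequence $\tau$, and then invokes Lemma~\ref{l2.1}, mirroring the argument for Theorem~\ref{t2.1} on the sphere. Your additional remarks on the block alignment $D(k,d)=\sum_{j=0}^k N(j,d)$ and on the isometry interpretation of $T^\Lz$ are accurate elaborations of points the paper leaves implicit.
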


 \begin{thm}\label{t6.2}Suppose that $\lim\limits_{k\to\infty}\lz_{k,d}k^s=1$ for some $s>0$. Then
\begin{equation}\label{6.5} \lim_{n\to\infty}n^{s/d} a_n(I_d: H_{\mu}^\Lz(\bb)\to
L_{2,\mu}(\bb))=\Big(\frac1{d\,!}\Big)^{s/d},\ \mu\ge
0.\end{equation}Specially, we have for $r>0$,
\begin{equation}\label{6.6}\lim_{n\to\infty}n^{r/d}a_n(I_d: H^{r,*}(\bb)\to
L_2(\bb))=\Big(\frac1{d\,!}\Big)^{r/d}\end{equation}
\end{thm}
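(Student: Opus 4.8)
The plan is to mirror exactly the argument given for \thmref{t3.1} on the sphere, replacing the dimension formula $C(d,k)$ of spherical polynomial spaces by the corresponding formula $D(k,d)=\binom{k+d}{k}$ on the ball. First I would invoke \thmref{t6.1}: for $D(k-1,d)<n\le D(k,d)$ one has $a_n(I_d:H_\mu^\Lz(\bb)\to L_{2,\mu}(\bb))=\lz_{k,d}$, so that
\begin{equation*}
(D(k-1,d))^{s/d}\lz_{k,d}< n^{s/d}a_n(I_d:H_\mu^\Lz(\bb)\to L_{2,\mu}(\bb))\le (D(k,d))^{s/d}\lz_{k,d}.
\end{equation*}
Thus it suffices to compute the limits of the two outer expressions as $k\to\infty$.

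Next I would establish the elementary asymptotic
\begin{equation*}
\lim_{k\to\infty}\frac{D(k,d)}{k^d}=\lim_{k\to\infty}\frac{D(k-1,d)}{k^d}=\frac1{d\,!},
\end{equation*}
which follows immediately from $D(k,d)=\binom{k+d}{k}=\frac{(k+d)!}{k!\,d!}=\frac{(k+1)(k+2)\cdots(k+d)}{d!}$ and the fact that this is a monic-up-to-$1/d!$ polynomial in $k$ of degree $d$; the same holds for $D(k-1,d)$ since shifting $k$ by a bounded amount does not affect the leading term. This is the only place where the formula for the ball genuinely differs from the sphere: the factor $2$ in $\frac{2}{d!}$ on $\ss$ came from the extra factor $\frac{2k+d}{k+d}\to 2$ in $C(d,k)$, which is absent here, giving $\frac{1}{d!}$ instead.

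Then I would combine the two facts: using $\lim_{k\to\infty}\lz_{k,d}k^s=1$,
\begin{align*}
\lim_{k\to\infty}(D(k-1,d))^{s/d}\lz_{k,d}&=\lim_{k\to\infty}(D(k,d))^{s/d}\lz_{k,d}\\
&=\lim_{k\to\infty}\Big(\frac{D(k,d)}{k^d}\Big)^{s/d}k^s\lz_{k,d}=\Big(\frac1{d\,!}\Big)^{s/d},
\end{align*}
and the squeeze between the two displayed bounds yields \eqref{6.5}. For \eqref{6.6} I would specialize to $\lz_{k,d}=\tilde r_{k,d}^*=(1+(k(k+d))^r)^{-1/2}$ and check that $\lim_{k\to\infty}\tilde r_{k,d}^*k^r=\lim_{k\to\infty}\big(k^{-2r}+(1+d/k)^r\big)^{-1/2}=1$, so the hypothesis of the theorem is satisfied with $s=r$, giving the claimed limit $\big(\frac1{d!}\big)^{r/d}$.

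I do not anticipate a genuine obstacle here: the proof is a routine transcription of the spherical case, and the only content is the slightly different combinatorial limit $D(k,d)/k^d\to 1/d!$. The one minor point worth care is that $D(k,d)$ and $D(k-1,d)$ must have the same leading-order behavior in $k$ (they do, since $d$ is fixed), so that the squeeze is effective; this is where the argument would break if $d$ were allowed to grow with $n$, but that is not the regime of this particular theorem.
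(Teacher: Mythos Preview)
Your proposal is correct and follows exactly the same approach as the paper's own proof, which explicitly says it is ``similar to the one of \eqref{3.1}'', invokes \thmref{t6.1} to get the same two-sided bound, and uses $\lim_{k\to\infty}D(k,d)k^{-d}=1/d!$ in place of the spherical limit $2/d!$. Your added details (the explicit expansion of $D(k,d)$ and the verification that $\tilde r_{k,d}^* k^r\to 1$) are all accurate and simply spell out what the paper leaves implicit.
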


\begin{proof} The   proof  is similar to the one of \eqref{3.1}.
 For $D(k-1,d)<n\le D(k,d),\ k=0, 1,2,\dots,$ we have
$$a_n(I_d: H_{\mu}^\Lz(\bb)\to L_{2,\mu}(\bb))=\lz_{k,d},\ \mu\ge 0,$$where
$D(k,d)=\binom{k+d}k$. It follows that
$$(D(k-1,d))^{s/d}\lz_{k,d}\le n^{s/d} a_n(I_d: W_{2,\mu}^\Lz(\bb)\to L_{2,\mu}(\bb))\le (D(k,d))^{s/d}\lz_{k,d}. $$
Using the argument of \eqref{3.1} and noting that
$\lim\limits_{k\to\infty}D(k,d)k^{-d}=\frac 1{d\,!} $, we get
\eqref{6.5}. Theorem \ref{t6.2} is proved.
 \end{proof}

\begin{rem}   One can rephrase \eqref{6.6}  as a strong
equivalences $$a_n(I_d: H^{r,*}(\bb)\to L_2(\bb))\sim
n^{-r/d}\Big(\frac1{d\,!}\Big)^{r/d}
$$for $r>0$. The novelty of Theorems
\ref{t6.2}  is that they give a strong equivalence of $a_n(I_d:
H^{r,*}(\bb)\to L_2(\bb)) $ and provide asymptotically optimal
constants, for arbitrary fixed $d$ and $r>0$.
\end{rem}

\begin{thm}\label{t6.3}Let $0<\az<1$, $\beta>0$, and $\tilde \gamma=\Big(\frac1{d\,!}\Big)^{-\az/d}$. Then we have
\begin{equation}\label{6.7}\lim_{n\to \infty }e^{\beta \tilde\gz n^{\az/d}}a_n(I_d: G^{\az, \beta}(\bb)\to L_2(\bb))=1.\end{equation}\end{thm}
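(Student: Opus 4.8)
The plan is to imitate exactly the proof of Theorem~\ref{t3.3}, replacing the spherical polynomial-counting function $C(d,k)$ by its ball analogue $D(k,d)=\binom{k+d}{k}$. First I would invoke \eqref{6.4}: for $D(k-1,d)<n\le D(k,d)$, $k=0,1,2,\dots$, we have
$$a_n\equiv a_n(I_d: G^{\az,\beta}(\bb)\to L_2(\bb))=e^{-\beta k^\az}.$$
Multiplying through by $e^{\beta\tilde\gz n^{\az/d}}$ and using monotonicity of $n\mapsto n^{\az/d}$ gives the sandwich
\begin{equation*}e^{\beta\tilde\gz(D(k-1,d))^{\az/d}}e^{-\beta k^\az}< e^{\beta\tilde\gz n^{\az/d}}a_n\le e^{\beta\tilde\gz(D(k,d))^{\az/d}}e^{-\beta k^\az},\end{equation*}
valid for all $n$ in the $k$-th block, so it suffices to show both outer expressions tend to $1$ as $k\to\infty$.

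Next I would analyse the exponents. Write $D(k,d)=\binom{k+d}{k}=\prod_{j=1}^{d}\frac{k+j}{j}=\frac1{d\,!}\prod_{j=1}^{d}(k+j)$, hence
$$\tilde\gz\,(D(k,d))^{\az/d}-k^\az=k^\az\Big(\Big(\tfrac{D(k,d)d\,!}{k^d}\Big)^{\az/d}-1\Big)=k^\az\Big(\Big(\prod_{j=1}^{d}\big(1+\tfrac jk\big)\Big)^{\az/d}-1\Big),$$
using $\tilde\gz=(1/d\,!)^{-\az/d}$. Since $\prod_{j=1}^{d}(1+j/k)=1+O(1/k)$ as $k\to\infty$ (with $d$ fixed), the bracket is $O(1/k)$, and multiplying by $k^\az$ with $0<\az<1$ gives $O(k^{\az-1})\to0$. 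The same computation with $k$ replaced by $k-1$ throughout shows $\tilde\gz\,(D(k-1,d))^{\az/d}-(k-1)^\az\to0$; combined with $(k-1)^\az-k^\az\to0$ (again because $\az<1$, by the mean value theorem $|k^\az-(k-1)^\az|\le \az(k-1)^{\az-1}\to0$), we get $\tilde\gz\,(D(k-1,d))^{\az/d}-k^\az\to0$ as well. Exponentiating and using continuity of $\exp$, both outer terms of the sandwich converge to $e^0=1$, which yields \eqref{6.7}.

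The main (and only real) obstacle is the elementary asymptotic bookkeeping for the lower endpoint $D(k-1,d)$: one must be a little careful to split $\tilde\gz(D(k-1,d))^{\az/d}-k^\az$ as $\big(\tilde\gz(D(k-1,d))^{\az/d}-(k-1)^\az\big)+\big((k-1)^\az-k^\az\big)$ and observe that \emph{both} pieces vanish precisely because $\az<1$ — this is exactly where the hypothesis $0<\az<1$ is used, and, as Remark following Theorem~\ref{t3.3} for the sphere shows, the strong equivalence genuinely fails at $\az=1$. Everything else is a verbatim transcription of the argument in Theorem~\ref{t3.3}, so I would keep the write-up short, referring to that proof for the repeated steps.

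\begin{proof}
The proof is completely analogous to that of Theorem~\ref{t3.3}, using \eqref{6.4} in place of \eqref{2.4} and $D(k,d)=\binom{k+d}{k}$ in place of $C(d,k)$. Indeed, by \eqref{6.4}, for $D(k-1,d)<n\le D(k,d)$ we have $a_n\equiv a_n(I_d: G^{\az,\beta}(\bb)\to L_2(\bb))=e^{-\beta k^\az}$, hence
\begin{equation*}e^{\beta\tilde\gz(D(k-1,d))^{\az/d}}e^{-\beta k^\az}< e^{\beta\tilde\gz n^{\az/d}}a_n\le e^{\beta\tilde\gz(D(k,d))^{\az/d}}e^{-\beta k^\az}.\end{equation*}
Since $D(k,d)=\frac1{d\,!}\prod_{j=1}^{d}(k+j)$ and $\tilde\gz=(1/d\,!)^{-\az/d}$, for $0<\az<1$ we have
$$\lim_{k\to\infty}\big(\tilde\gz(D(k,d))^{\az/d}-k^\az\big)=\lim_{k\to\infty}k^\az\Big(\Big(\prod_{j=1}^{d}\big(1+\tfrac jk\big)\Big)^{\az/d}-1\Big)=0,$$
so that $\lim_{k\to\infty}e^{\beta\tilde\gz(D(k,d))^{\az/d}}e^{-\beta k^\az}=1$. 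Likewise, writing $\tilde\gz(D(k-1,d))^{\az/d}-k^\az=\big(\tilde\gz(D(k-1,d))^{\az/d}-(k-1)^\az\big)+\big((k-1)^\az-k^\az\big)$ and noting that both summands tend to $0$ as $k\to\infty$ (the first by the same computation as above with $k$ replaced by $k-1$, the second since $|k^\az-(k-1)^\az|\le\az(k-1)^{\az-1}\to0$ as $0<\az<1$), we obtain $\lim_{k\to\infty}e^{\beta\tilde\gz(D(k-1,d))^{\az/d}}e^{-\beta k^\az}=1$. Combining these two limits with the displayed inequality yields \eqref{6.7}. Theorem~\ref{t6.3} is proved.
\end{proof}
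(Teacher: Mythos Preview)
Your proposal is correct and follows essentially the same approach as the paper: the paper's proof invokes \eqref{6.4}, sets up the identical sandwich inequality, computes the same limit $\lim_{k\to\infty}\big(\tilde\gz(D(k,d))^{\az/d}-k^\az\big)=\lim_{k\to\infty}k^\az\big((\prod_{j=1}^d(1+j/k))^{\az/d}-1\big)=0$, and then simply refers back to the argument of \eqref{3.3} for the lower endpoint. Your write-up is in fact slightly more explicit than the paper's, since you spell out the split $\tilde\gz(D(k-1,d))^{\az/d}-k^\az=\big(\tilde\gz(D(k-1,d))^{\az/d}-(k-1)^\az\big)+\big((k-1)^\az-k^\az\big)$ and the mean-value bound, whereas the paper handles the lower endpoint with a bare ``similarly''.
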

\begin{proof}It follows from \eqref{6.4} that for $D(k-1,d)<n\le D(k,d),\
k=0,1,2,\dots,$
\begin{equation*} a_n\equiv a_n(I_d: G^{\az,\beta}(\bb)\to
L_2(\bb))=e^{-\beta k^\az}.\end{equation*}Therefore, we have
\begin{equation*} e^{\beta \tilde\gz (D(k-1,d))^{\az/d}}e^{-\beta k^\az}<  e^{\beta \tilde\gz n^{\az/d}}
a_n\le e^{\beta \tilde\gz (D(k,d))^{\az/d}} e^{-\beta
k^\az}.\end{equation*} Since for $0<\az<1$,
$$\lim_{k\to\infty}(\tilde \gz (D(k,d))^{\az/d} -k^\az)=\lim_{k\to\infty}k^\az\Big(\Big( \prod_{j=1}^{d}(1+\frac jk)\Big)^{\frac \az d}-1\Big)=0,$$
similar to the proof of \eqref{3.3}, we get \eqref{6.7}. Theorem
\ref{t6.3} is proved.
\end{proof}

\begin{rem}   One can rephrase \eqref{6.7} as  a strong
equivalence
$$a_n(I_d: G^{\az, \beta}(\bb)\to L_2(\bb))\sim e^{-\beta \tilde\gz
n^{\az/d}}$$ for $0<\az<1$ and $\beta>0$, where
$\tilde\gamma=\Big(\frac1{d\,!}\Big)^{-\az/d}$. The novelty of
Theorems \ref{t6.3} is that they give a strong equivalence of $
a_n(I_d: G^{\az, \beta}(\bb)\to L_2(\bb))$ and provide
asymptotically optimal constants, for arbitrary fixed $d$,
$0<\az<1$, and $\beta>0$.
\end{rem}

\subsection{Preasymptotics and asymptotics  on the ball}

\

Let $r>0$ and $\az,\beta>0$. Then for $D(m-1,d)<n\le D(m,d),\
m=0,1,2,\dots,$ we have
\begin{equation*}a_n(I_d: H^{r,*}(\bb)\to L_2(\bb))=(1+(m(m+d))^r)^{-1/2},\end{equation*}and
 \begin{equation*}a_n(I_d: G^{\az,\beta}(\bb)\to L_2(\bb))=e^{-\beta m^\az},\end{equation*}
where $D(-1,d)=0$ and $D(m,d)=\binom {d+m}d$. We note that
\begin{align*}\max\Big\{\big(1+\frac{m}{d}\big)^{d},\,\big(1+\frac{d}{m}\big)^{m}\Big\}\leq
D(m,d) \le
\min\Big\{e^d\big(1+\frac{m}{d}\big)^{d},\,e^m\big(1+\frac{d}{m}\big)^{m}\Big\}.
\end{align*}
Using the same reasoning as in the proof of Theorem \ref{t4.1}, we
obtain that for $D(m-1,d)<n\le D(m,d),\ 1\le m \le d,$
$$  m\asymp \frac{ \log n}{1+\log
\big(\frac {d}{\log n}\big)},$$and for $D(m-1,d)<n\le D(m,d),\ m>
d,$ $$m\asymp dn^{1/d}.$$By the above two equivalences  we can
obtain the following two theorems.

\begin{thm}\label{t6.4} Let $r>0$.
We have
\begin{equation}\label{6.8}
a_n(I_d: H^{r,*}(\bb)\to L_2(\bb)) \asymp
\left\{\begin{matrix} 1,\ \ \ &n=1,\\
d^{-r/2}, & \ \  2\le n\leq d,\\
 d^{-r/2}\Big(\frac{\log(1+\frac{d}{\log n})}{\log n}\Big)^{r/2},&\ \  d\le n\le 2^d, \\
 d^{-r}n^{-r/d},&\ \  n\ge 2^d,
\end{matrix}\right.
\end{equation}
where the equivalence constants depend only on $r$, but not on $d$
and $n$.
\end{thm}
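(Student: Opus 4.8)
The plan is to read everything off the exact formula in Theorem~\ref{t6.1}: for $D(m-1,d)<n\le D(m,d)$ (with $D(-1,d)=0$ and $D(m,d)=\binom{d+m}{d}$) one has the identity
$$a_n(I_d: H^{r,*}(\bb)\to L_2(\bb))=(1+(m(m+d))^r)^{-1/2}.$$
So for each of the four ranges of $n$ it suffices to locate the block index $m=m(n,d)$ and then estimate $(1+(m(m+d))^r)^{-1/2}$ up to constants depending only on $r$. The two index equivalences that do this — namely $m\asymp \log n/\big(1+\log(d/\log n)\big)$ when $D(m-1,d)<n\le D(m,d)$ and $1\le m\le d$, and $m\asymp dn^{1/d}$ when $m>d$ — have already been recorded in the passage preceding the statement; they are obtained exactly as in the proof of Theorem~\ref{t4.1} from the two-sided bound $\max\{(1+m/d)^d,(1+d/m)^m\}\le D(m,d)\le\min\{e^d(1+m/d)^d,e^m(1+d/m)^m\}$.

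With these in hand I would run through the ranges. For $n=1$ we are in the block $m=0$, so $a_1=1$. For $2\le n\le D(1,d)=d+1$ we are in the block $m=1$, so $a_n=(1+(d+1)^r)^{-1/2}\asymp_r d^{-r/2}$; since $\log(1+d/\log n)/\log n\asymp 1$ throughout $2\le n\le d+1$, this matches both the second line of \eqref{6.8} and the third line on their overlap. For $D(m-1,d)<n\le D(m,d)$ with $2\le m\le d$ we have $m(m+d)\asymp md$, hence $a_n\asymp_r (md)^{-r/2}\asymp_r d^{-r/2}\big(\log(1+d/\log n)/\log n\big)^{r/2}$; recalling $2^d\le D(d,d)\le(2e)^d$, on the sub-range $2^d\le n\le D(d,d)$ one has $m\asymp d$ and $n^{-r/d}\asymp 1$, so this agrees with the fourth line. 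Finally for $m>d$, i.e.\ $n>D(d,d)$, we have $m(m+d)\asymp m^2$ and $m\asymp dn^{1/d}$, so $a_n\asymp m^{-r}\asymp d^{-r}n^{-r/d}$.

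The main obstacle, already dealt with in the passage before the statement, is inverting $n\asymp D(m,d)$ to extract $m\asymp \log n/(1+\log(d/\log n))$ in the regime $m\le d$: as in the proof of Theorem~\ref{t4.1} this uses $n\le D(m,d)\le e^m(1+d/m)^m\le(2ed/m)^m$ together with the elementary inequality $x\ge 2\log x$ for $x\ge 2$ (applied to $x=2ed/\log n$), and the lower bound $n>D(m-1,d)\ge(1+d/(m-1))^{m-1}$. Since the combinatorial estimates for $D(m,d)$ coincide with those used for $C(d,m)$ in Theorem~\ref{t4.1}, the whole argument is a transcription of that proof under the substitutions $C(d,m)\mapsto D(m,d)$ and $m(m+d-1)\mapsto m(m+d)$; the boundary-consistency checks between consecutive lines of \eqref{6.8} are the same routine computations as for \eqref{4.2}, and no new idea is needed.
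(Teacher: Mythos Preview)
Your proposal is correct and follows exactly the paper's approach: the paper itself does not give a separate proof of this theorem but merely records the two index equivalences $m\asymp \log n/(1+\log(d/\log n))$ for $1\le m\le d$ and $m\asymp dn^{1/d}$ for $m>d$ (derived from the binomial bounds on $D(m,d)$ by the same reasoning as in Theorem~\ref{t4.1}) and then declares that the result follows. Your write-up is in fact more detailed than the paper's, and the only point worth flagging is a harmless slip in the parenthetical remark --- the inequality $x\ge 2\log x$ is applied with $x=\log(2ed/m)$, not $x=2ed/\log n$ --- but this does not affect the argument.
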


\begin{thm}\label{t6.5} Let $\alpha,\beta>0$.
We have
\begin{equation}\label{6.9}
\ln \big(a_n(I_d: G^{\az,\beta}(\bb)\to L_2(\bb))\big) \asymp
-\beta
 \left\{\begin{matrix}
1, & \ \  1\le n\leq d,\\
 \Big(\frac{\log n}{\log(1+\frac{d}{\log n})}\Big)^{\az},&\ \  d\le n\le 2^d, \\
 d^{\az}n^{\az/d},&\ \  n\ge 2^d,
\end{matrix}\right.
\end{equation}
 where the equivalence constants depend only on $\az$, but not on
$d$ and $n$.
\end{thm}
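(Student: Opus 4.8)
The plan is to mirror exactly the proof of Theorem~\ref{t4.3} on the sphere, replacing the estimates on $C(d,m)$ with the analogous bounds on $D(m,d)=\binom{m+d}{d}$ that are already recorded in the excerpt. By Theorem~\ref{t6.1}, for $D(m-1,d)<n\le D(m,d)$ we have the exact identity
\begin{equation*}
\ln\big(a_n(I_d: G^{\az,\beta}(\bb)\to L_2(\bb))\big)=-\beta m^\az,
\end{equation*}
so the whole problem reduces to understanding how the index $m$ behaves as a function of $n$ and $d$ in each of the three ranges $1\le n\le d$, $d\le n\le 2^d$, and $n\ge 2^d$.

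First I would treat the small range. For $1\le n\le d+1=D(1,d)$ we have $m=1$ (using $D(0,d)=1$), hence $\ln a_n=-\beta$, which matches the first line of~\eqref{6.9}. Next, for the middle range $D(m-1,d)<n\le D(m,d)$ with $2\le m\le d$, I would use the two-sided bound
$$\max\Big\{\big(1+\tfrac md\big)^d,\big(1+\tfrac dm\big)^m\Big\}\le D(m,d)\le\min\Big\{e^d\big(1+\tfrac md\big)^d,e^m\big(1+\tfrac dm\big)^m\Big\}$$
stated just before Theorem~\ref{t6.4}. Exactly as in the derivation of~\eqref{4.6}, from $n\le D(m,d)\le e^m(1+d/m)^m\le(2ed/m)^m$ one gets $\log n\le m\log(2ed/m)$, and then the inequality $x\ge 2\log x$ for $x\ge 2$ yields a lower bound $m\gtrsim \log n/\log(2ed/\log n)$; from $n>D(m-1,d)\ge(1+d/(m-1))^{m-1}$ one gets the matching upper bound $m\lesssim \log n/\log(2d/\log n)+1$. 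Combining these gives
$$m\asymp\frac{\log n}{1+\log(d/\log n)},$$
and substituting into $-\beta m^\az$ produces the second line of~\eqref{6.9}, with equivalence constants depending only on $\az$. I should also note (as in the sphere case) that $2^d\le D(d,d)\le(2e)^d$ so the ranges overlap cleanly at $n\asymp 2^d$, where both the middle and large expressions are $\asymp -\beta d^\az$.

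Finally, for the large range $D(m-1,d)<n\le D(m,d)$ with $m>d$, I would use $m/d\le 1+(m-1)/d\le D(m-1,d)^{1/d}\le n^{1/d}\le D(m,d)^{1/d}\le e(1+m/d)\le 2em/d$, which gives $m\asymp dn^{1/d}$; hence $m^\az\asymp d^\az n^{\az/d}$, i.e.\ the third line of~\eqref{6.9}. Since the relevant inequalities for $D(m,d)=\binom{m+d}{d}$ are literally the same as those for $C(d,m)=\binom{m+d}{d}\frac{2m+d}{m+d}$ used in Theorems~\ref{t4.1} and~\ref{t4.3} (the factor $\frac{2m+d}{m+d}\in[1,2]$ is absorbed into the constants, and for the ball it is simply absent), there is no new obstacle; the only thing to be careful about is that the index shift is slightly different on the ball ($D(0,d)=1$, $D(1,d)=d+1$) than on the sphere ($C(d,0)=1$, $C(d,1)=d+2$), but this only affects the boundary cases $n=1$ and $n\asymp d$, where the claimed equivalence is insensitive to such $O(1)$ shifts. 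This completes the proof of Theorem~\ref{t6.5}.
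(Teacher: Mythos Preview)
Your proposal is correct and follows exactly the paper's approach: the paper proves Theorems~\ref{t6.4} and~\ref{t6.5} by first recording the two-sided bounds on $D(m,d)=\binom{m+d}{d}$, then stating that ``using the same reasoning as in the proof of Theorem~\ref{t4.1}'' one obtains $m\asymp \log n/(1+\log(d/\log n))$ for $1\le m\le d$ and $m\asymp dn^{1/d}$ for $m>d$, from which both theorems follow by substitution into the exact formula $a_n=e^{-\beta m^\az}$ from Theorem~\ref{t6.1}. One small slip: for $n=1$ you actually have $D(-1,d)=0<1\le D(0,d)=1$, so $m=0$ rather than $m=1$ (giving $\ln a_1=0$); this is a boundary artifact already present in the statement of the theorem and does not affect the argument for $n\ge 2$.
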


\subsection{Tractability on the ball}

\

Using the same methods as in Theorems \ref{t5.1} and \ref{t5.2},
we obtain the two theorems.
\begin{thm}\label{t6.6}
Let $r>0$ and  $s,\,t>0$. Then
 the approximation problem
$$I_d: H^{r,*}(\bb)\rightarrow L_2(\bb)$$
is $(s, t)$-weakly tractable if and only if $s>1/r$ and $t>0$ or
$s>0$ and $t>1$. Specially,  the approximation problem $I_d:
H^{r,*}(\bb)\rightarrow L_2(\bb)$
 is weakly tractable if and only if $r>1$, not uniformly
weakly tractable,  and does not suffer from the curse of
dimensionality.
\end{thm}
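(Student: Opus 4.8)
The plan is to transfer the argument for the sphere (Theorem~\ref{t5.1}) verbatim, using the fact that by Theorem~\ref{t6.1} the $n$th minimal error $e(n,d)=a_{n+1}(I_d:H^{r,*}(\bb)\to L_2(\bb))$ equals $\tilde r_{m,d}^*=(1+(m(m+d))^r)^{-1/2}$ on the block $D(m-1,d)<n+1\le D(m,d)$, together with the counting bounds
\begin{equation*}
\max\Big\{\big(1+\tfrac{m}{d}\big)^{d},\,\big(1+\tfrac{d}{m}\big)^{m}\Big\}\le D(m,d)\le \min\Big\{e^d\big(1+\tfrac{m}{d}\big)^{d},\,e^m\big(1+\tfrac{d}{m}\big)^{m}\Big\}
\end{equation*}
already recorded before Theorem~\ref{t6.4}. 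The only structural difference from the sphere is the eigenvalue $m(m+d)$ in place of $m(m+d-1)$ and the binomial $D(m,d)=\binom{d+m}{d}$ in place of $C(d,m)$, neither of which affects any estimate up to the $r$-dependent equivalence constants; in particular $\tilde r_{m,d}^*\asymp (1+m^r d^r)^{-1/2}$ exactly as for $r^*_{m,d}$ on $\ss$.

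First I would prove the negative direction. For $0<s\le 1/r$ and $0<t\le 1$, observe that $e(D(d,d)-1,d)=\tilde r_{d,d}^*$, so choosing $\vz_d=\tilde r_{d+1,d}^*<\tilde r_{d,d}^*$ forces $n(\vz_d,d)\ge D(d,d)\ge 2^d$, whence
\begin{equation*}
\lim_{1/\vz_d+d\to\infty}\frac{\ln n(\vz_d,d)}{(\vz_d)^{-s}+d^t}\ge \lim_{d\to\infty}\frac{d\ln 2}{(\tilde r_{d+1,d}^*)^{-1/r}+d}\neq 0,
\end{equation*}
since $(\tilde r_{d+1,d}^*)^{-1/r}\asymp_r ((d+1)(2d+1))^{1/2}\asymp_r d$; this kills $(s,t)$-weak tractability in that range. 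For the positive direction, given $0<\vz<1$ pick $m$ with $\tilde r_{m,d}^*\le\vz<\tilde r_{m-1,d}^*$, so that $n(\vz,d)\le D(d,m-1)<D(d,m)\le e^m(1+d/m)^m$ and also $\le e^d(1+m/d)^d$; taking logarithms gives $\ln n(\vz,d)\lesssim \min\{m\ln(1+\tfrac{m+d}{m}),\,d\ln(1+\tfrac{m+d}{d})\}\lesssim \min\{m\ln(m+d),\,d\ln(m+d)\}$, while $\vz^{-s}\ge (\tilde r_{m-1,d}^*)^{s}\gtrsim_r (m-1)^{rs}$. An application of Lemma~\ref{l5.1} with exponent $t>1$ (using the $d\ln(m+d)$ bound) or with $rs>1$ (using the $m\ln(m+d)$ bound) then yields $\ln n(\vz,d)/(\vz^{-s}+d^t)\to 0$.

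Finally, the specialization to $s=t=1$ gives weak tractability iff $r>1$; the failure of uniform weak tractability is exactly the failure of $(s,t)$-weak tractability for, say, $s=1/(2r)$, $t=1/2$, which lies in the forbidden range; and the absence of the curse of dimensionality follows because for the absolute error criterion $n(\vz,d)\le D(d,m)$ with $m$ depending only on $\vz$ and $r$ (not on $d$), and $D(d,m)=\binom{d+m}{d}$ grows only polynomially in $d$ for fixed $m$. The main obstacle, such as it is, is purely bookkeeping: one must confirm that the two counting estimates for $D(m,d)$ feed Lemma~\ref{l5.1} in exactly the two regimes ($d$-heavy for $t>1$, $m$-heavy for $rs>1$) needed to cover the whole tractable region, and that the normalized and absolute criteria coincide here since $e(0,d)=\|I_d\|=1$ for $H^{r,*}(\bb)$.
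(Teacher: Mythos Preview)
Your proposal is correct and is exactly the approach the paper takes: it simply states that Theorem~\ref{t6.6} follows by the same method as Theorem~\ref{t5.1}, replacing $C(d,m)$ and $r_{m,d}^{*}$ by $D(m,d)$ and $\tilde r_{m,d}^{*}$ and using the binomial bounds recorded before Theorem~\ref{t6.4}. Two small slips to fix: in the positive direction you wrote $\vz^{-s}\ge(\tilde r_{m-1,d}^{*})^{s}$ where you mean $(\tilde r_{m-1,d}^{*})^{-s}$, and your $D(d,m)$ should be $D(m,d)$ to match the paper's convention.
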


\begin{thm}\label{t6.7}
Let $\az,\beta>0$. Then
 the approximation problem
$$I_d: G^{\az,\beta}(\bb)\rightarrow L_2(\bb)$$

(1)  is uniformly   weakly tractable.

(2)   is not polynomially tractable.

 (3)   is quasi-polynomially tractable if and only if $\az\ge 1$ and  the exponent
of quasi-polynomial tractability is $$t^{\rm qpol}=\sup_{m\in \Bbb
N}\frac m {1+\beta m^\az},\ \ \az\ge1.$$Specially, if $\az=1$,
then $t^{\rm qpol}=\frac1\beta$.\end{thm}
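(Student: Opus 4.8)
The plan is to reduce everything on the ball to the corresponding statements on the sphere, since the only structural difference is that the multiplicity $Z(d,k)$ and dimension $C(d,k)$ are replaced by $N(k,d)=\binom{k+d-1}{k}$ and $D(k,d)=\binom{k+d}{d}$, and these two pairs of quantities satisfy the \emph{same} two-sided estimates. Concretely, Theorem~\ref{t6.1} already tells us that for $D(k-1,d)<n\le D(k,d)$ one has $a_n(I_d:H^{r,*}(\bb)\to L_2(\bb))=(1+(k(k+d))^r)^{-1/2}$ and $a_n(I_d:G^{\az,\beta}(\bb)\to L_2(\bb))=e^{-\beta k^\az}$, and the remark preceding the subsection records that $D(k,d)$ obeys the same bounds as in Lemma~\ref{l4.1}, namely
\begin{align*}
\max\Big\{\big(1+\tfrac{m}{d}\big)^{d},\,\big(1+\tfrac{d}{m}\big)^{m}\Big\}\le D(m,d)\le\min\Big\{e^d\big(1+\tfrac{m}{d}\big)^{d},\,e^m\big(1+\tfrac{d}{m}\big)^{m}\Big\}.
\end{align*}
So the first step for Theorem~\ref{t6.6} and Theorem~\ref{t6.7} is to rerun the proofs of Theorems~\ref{t5.1} and \ref{t5.2} verbatim, replacing $C(d,m)$ by $D(m,d)$ throughout; since the preasymptotic/asymptotic estimates on $\bb$ (Theorems~\ref{t6.4}, \ref{t6.5}) are literally the same as on $\ss$, and the key inputs (Lemma~\ref{l4.1}-type bounds, Lemma~\ref{l5.1}) are dimension/geometry-free, no genuinely new argument is required.

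For Theorem~\ref{t6.6}, I would argue exactly as in Theorem~\ref{t5.1}(1): for the lower bound (non-tractability when $0<s\le 1/r$, $0<t\le1$), pick $\vz_d=\tilde r^*_{d+1,d}<\tilde r^*_{d,d}$, observe $n(\vz_d,d)\ge D(d,d)\ge 2^d$, and note $\ln n(\vz_d,d)/((\vz_d)^{-s}+d^t)\ge d/((\tilde r^*_{d+1,d})^{-1/r}+d)\not\to0$; for the upper bound, given $\vz$ choose $m$ with $\tilde r^*_{m,d}\le\vz<\tilde r^*_{m-1,d}$, so $n(\vz,d)\le D(m-1,d)<D(m,d)$, and apply the $D(m,d)$-analogue of \eqref{4.1} together with Lemma~\ref{l5.1} in the two regimes $s>0,t>1$ and $s>1/r,t>0$. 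The specialization to weak tractability ($s=t=1$), non-uniform weak tractability, and absence of the curse of dimensionality follows by the same specialization as on the sphere. Note that here $e(0,d)=1$ for $H^{r,*}(\bb)$, so there is no normalized-versus-absolute distinction and no analogue of Theorem~\ref{t5.1}(2) is needed.

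For Theorem~\ref{t6.7}, I would copy the three parts of the proof of Theorem~\ref{t5.2}. Part (1): for $\vz$ choose $m$ with $e^{-\beta(m+1)^\az}\le\vz<e^{-\beta m^\az}$, so $n(\vz,d)\le D(m,d)\le e^m(1+d/m)^m$, giving $\ln n(\vz,d)\le m(1+\ln(m+d))$; combine $e^{s\beta m^\az}\ge cm^2$ with Lemma~\ref{l5.1} to conclude uniform weak tractability. Part (2): take $\vz_d=e^{-\beta(m_d+1)^\az}$ with $m_d=[(\tfrac{q}{\beta p}\ln d)^{1/\az}]-1$ so $(\vz_d)^{-p}\le d^q$, then $n(\vz_d,d)=D(m_d,d)\ge(d/m_d)^{m_d}=e^{m_d(\ln d-\ln m_d)}$, which for large $d$ exceeds $d^{2q+1}$, contradicting polynomial tractability. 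Part (3): for $0<\az<1$ the same $\vz_d,m_d$ give $n(\vz_d,d)=D(m_d,d)\ge d^{m_d/2}$, which beats $e^{t(1+\ln d)(1+\ln\vz_d^{-1})}$ for every $t>0$; for $\az\ge1$, set $t_0=\sup_{m\in\Bbb N}\tfrac{m}{1+\beta m^\az}$, pick $m$ with $e^{-\beta(m+1)^\az}\le\vz<e^{-\beta m^\az}$ so $n(\vz,d)\le D(m,d)$, and bound
\begin{align*}
\sup_{d,\vz}\frac{n(\vz,d)}{e^{t_0(1+\ln d)(1+\ln\frac1\vz)}}\le\sup_{d,m}\frac{D(m,d)}{(ed)^{t_0(1+\beta m^\az)}}\le\sup_{d,m}\frac{D(m,d)}{(ed)^m},
\end{align*}
checking the boundary cases $m\le1$ or $d=1$ directly from $D(0,d)=1$, $D(1,d)=d+1$, $D(m,1)=m+1$, and for $m,d\ge2$ using $D(m,d)\le e^m(1+d/m)^m$ to get $D(m,d)/(ed)^m\le(1/d+1/m)^m\le1$; the matching lower bound on $t^{\rm qpol}$ (and the value $1/\beta$ when $\az=1$) comes from the same $\vz_d,m_d$ computation as in Theorem~\ref{t5.2}. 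The only point to watch — and the mild obstacle — is the bookkeeping that $D(1,d)=d+1$ rather than $d+2$, but the inequality $D(m,d)/(ed)^m\le2$ on the boundary still holds, so the estimate for $t^{\rm qpol}$ goes through unchanged; everything else is a mechanical transcription.
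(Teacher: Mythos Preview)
Your proposal is correct and follows exactly the approach the paper takes: the paper simply states that Theorems~\ref{t6.6} and \ref{t6.7} are obtained ``using the same methods as in Theorems~\ref{t5.1} and \ref{t5.2}'', and your transcription of those proofs with $C(d,m)$ replaced by $D(m,d)$ (together with the explicit verification of the boundary cases $D(0,d)=1$, $D(1,d)=d+1$, $D(m,1)=m+1$) is precisely what this entails. If anything, you have been more careful than the paper in checking that the small-$m$ and $d=1$ cases still give $D(m,d)/(ed)^m\le2$.
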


\begin{rem} We can also consider
exponential convergence tractability for the approximation problem
$$I_d: G^{\az,\beta}(\bb)\rightarrow L_2(\bb)\ \ (\az,\beta>0).$$

We can prove that  the approximation problem $I_d:
G^{\az,\beta}(\bb)\rightarrow L_2(\bb)$  is $(t,\ln^s)$-weakly
tractable if and only if   $\alpha>1/s$ and $t>0$ or $s>0$ and
$t>1$, and  is not exponential convergence-uniformly weakly
tractable for any $\az, \beta>0$. Specially, it  is exponential
convergence-weakly tractable if and only if $\az>1$.
\end{rem}

\section*{Acknowledgments}
 The  authors were supported
  by the National Natural Science
Foundation of China (Project no.  11671271, 11271263),
 the  Beijing Natural Science Foundation (1172004, 1132001).


\begin{thebibliography}{99}


\bibitem{AH} K. Atkinson, W. Han,  Spherical harmonics
and approximations on the unit sphere: an introduction, Lecture
Notes in Mathematics, 2044, Springer, Heidelberg, 2012.

\bi{CW} J. Chen, H. Wang, Preasymptotics and asymptotics of
approximation numbers of anisotropic Sobolev embeddings, To appear
in J. Complexity.



\bibitem{DaiX} F. Dai. Y. Xu. Approximation Theory and Harmonic Analysis on Sphere and Balls, Springer, New York, 2013.
\bi{DX}C. F. Dunkl, Yuan Xu, Orthogonal polynormials of several
variables, Cambridge Univ. Press, 2001. Josef Dick, Peter Kritzer,



\bi{G}M. Gevrey, Sur la nature analytique des solutions des
\'equations aux d\'eriv\'ees partielles. premier m\'emoire. In
Annales Scientifiques de l'\' Ecole Normale Sup\'erieure, vol. 35,
 129-190, Soci\'et\'e math\'ematique de France, 1918.

\bibitem{IKPW} C. Irrgeher, P. Kritzer,  F. Pillichshammer, H. Wo\'niakowski,
Approximation in Hermite spaces of smooth functions, J. Approx.
Theory 207 (2016) 98-126.
\bibitem{Ka} A.I. Kamzolov, On
the Kolmogorov diameters of classes of smooth functions on a
sphere, Russian Math. Surveys 44 (5) (1989) 196-197.
\bibitem{K} H. K\"onig. Eigenvalue Distribution of Compact Operators, Birkh\"auser, Basel, 1986.


\bibitem{KMU} T. K\"uhn, S. Mayer,  T. Ullrich, Counting via entropy: new preasymptotics
for the approximation numbers of Sobolev embeddings, SIAM J.
Numer. Anal. 54 (6) (2016) 3625-3647.
\bibitem{KSU1} T. K\"uhn, W. Sickel, T. Ullrich, Approximation numbers of Sobolev embeddings-Sharp constants and tractability, J. Complexity 30 (2014) 95-116.
\bibitem{KSU2} T. K\"uhn, W. Sickel, T. Ullrich,  Approximation of  mixed order Sobolev functions on the $d$-torus: asymptotics,preasymptotics,
and $d$-dependence,  Constr. Approx. 42(3) (2015) 353-398.

\bibitem{NW1} E. Novak. H. Wo\'zniakowski, Tractablity  of Multivariate Problems.
 Volume \uppercase\expandafter{\romannumeral1}: Liner Information, EMS, Z\"urich, 2008.
\bibitem{NW2} E. Novak. H. Wo\'zniakowski, Tractablity  of Multivariate Problems.
Volume \uppercase\expandafter{\romannumeral2}: Standard
Information for Functionals, EMS, Z\"urich, 2010.
\bibitem{NW3} E. Novak. H. Wo\'zniakowski, Tractablity  of Multivariate Problems.
 Volume \uppercase\expandafter{\romannumeral3}: Standard Information for Operators, EMS, Z\"urich, 2012.
 \bi{PPW} A. Papageorgiou, I. Petras, H. Wo\'zniakowski, $(s, \ln^\kappa)$-weak tractability of linear
 problems, to appear in J. Complexity.
 \bibitem{PW}F. Pillichshammer, H. Wo\'zniakowski,
Approximation of analytic functions in Korobov spaces, J.
Complexity, 30(2) (2014) 2-28.
 \bi{P} A. Pinkus, n-Widths in Approximation Theory, in: Ergeb.
Math. Grenzgeb., vol. 3.7, Springer, Berlin, 1985.

\bi{R}L. Rodino, Linear partial differential operators in Gevrey
spaces, World Scientific, 1993.

\bibitem{S} P. Siedlecki, Uniform weak tractability, J. Complexity 29(6)  (2013) 438-453.
\bibitem{SW}P. Siedlecki,  M. Weimar, Notes on $(s, t)$-weak
tractability: a refined classification of problems with
(sub)exponential information complexity, J. Approx. Theory 200
(2015) 227-258.

\bibitem{Te} V.N. Temlyakov, Approximation of Periodic Functions, Nova Science, New York, 1993.
 \bibitem {WH} H. Wang,  H. Huang, Widths of weighted Sobolev classes on
the ball, J. Approx. Theory  154 (2) (2008) 126-139.
\end{thebibliography}
\end{document}